\newtheorem{theorem}{Theorem}[section]
\newtheorem{proposition}[theorem]{Proposition}
\newtheorem{lemma}[theorem]{Lemma}
\newtheorem{remark}[theorem]{Remark}
\newtheorem{example}[theorem]{Construction}
\def\dG{\overrightarrow{G}}
\def\dO{\overrightarrow{O}}
 \newcommand{\aut}{\rm{Aut}}
\title{Automorphisms and quotients of 2-colored quasi best match graphs}
\date{} 					
\author{ \href{https://orcid.org/0000-0002-7334-669X}{\includegraphics[scale=0.06]{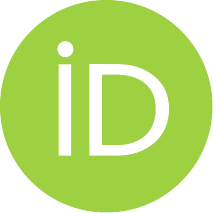}\hspace{1mm}Annachiara.~Korchmaros}\thanks{https://akorchmaros.com/} \\
	Universit{\"a}t Leipzig\\ Leipzig,      Germany\\
	\texttt{annachiara.korchmaros@uni-leipzig.de} \\
}
\begin{document}
\maketitle

\begin{abstract}
2-colored quasi best match graphs (2-qBMGs) are directed graphs that arose in phylogenetics. Investigations of 2-qBMGs have mostly focused on computational issues. However, 2-qBMGs also have relevant properties for structural graph theory; in particular, their undirected underlying graph is free
from induced paths and cycles of size at least $6$. In this paper, results on the structure of the automorphism groups of 2-qBMGs are obtained, which shows how to construct 2-qBMGs with large automorphism groups.
\end{abstract}

\keywords{ Best match graphs \and Group of automorphisms \and Bipartite graphs \and Phylogenetics}

\section{Introduction}
Automorphisms (also called symmetries) of geometric structures are elaborated, widely used technical tools, especially in topology and algebraic geometry. In the last three decades, they also appeared in numerous investigations of combinatorial structures, such as finite geometries and graphs. Motivation for the study of graph automorphisms came from group theory since many important groups were known to enjoy a very useful permutation representation as an automorphism group of a particular graph whose specific features provide a complete description of the structure of the group in terms of actions on vertices, edges, and certain subgraphs. 
 
This was the starting point for ongoing investigations aimed at characterizing families of groups with prescribed action on some directed or undirected graphs, involving primitivity on vertices, transitivity on edges and arcs, or flag-transitivity.  
Conversely, one can ask to describe the possibilities for the structures and actions of the automorphism groups of graphs belonging to a given family of directed or undirected graphs. A considerable amount of investigations have been done on this problem, particularly on Cayley graphs, cubic graphs, and tournaments, following the pioneering work in the 1980s by Alspach, Maru\v{s}i\v{c}, Moon, and others. For the rich literature on this topic, the reader is referred to the recent monograph~\cite{dobson2022symmetry}.  
 
In this paper, we focus on the family of 2-qBMGs (two-color quasi-best match graphs), which arose from evolution theory~\cite{geiss2019best,schaller2021corrigendum,manuela2020reciprocal,Schaller_2021, schaller2021heuristic,Ram_rez_Rafael_2024,Hellmuth_2024}. A purely graph-theoretic characterization of 2-qBMGs is given in~\cite{geiss2019best,korchmaros2023quasi}: A 2-qBMG  can be defined by the following three axioms where $\dG(V,E)$ stands for a digraph with vertex set $V$ and edge set $E$ which may have symmetric edges but no loops or parallel edges.    

\begin{itemize}
\item[(N1)] if $u$ and $v$ are two independent vertices then there exist no vertices $w,t$ such that $ut,vw,tw\in E$;
\item[(N2)] bi-transitive, i.e., if $uv,vw,wt\in E$ then $ut\in E$;
\item[(N3)] if $u$ and $v$ have common out-neighbor then either all out-neighbors of $u$ are also out-neighbors of $v$ or
all out-neighbors of $v$ are also out-neighbors of $u$.
\end{itemize}
Every 2-qBMG is a directed bipartite graph~\cite[Theorem 7.9]{korchmaros2023quasi}. Moreover, by \cite[Lemma 2.2]{korchmaros2023quasi}, a digraph $\overrightarrow{G}=\overrightarrow{G}(V,E)$ is a 2-qBMG if and only if it has properties (N1), (N2) and
\begin{itemize}
\item[(N3*)]
Let $u$ and $v$ be two vertices of the same color with a common out-neighbor such that there is no $w$ for which either $uw,wv\in E$ or $vw,wu\in E$. Then $u$ and $v$ have the same in-neighbors, whereas all out-neighbors of $u$ are also out-neighbors of $v$, or all out-neighbors of $v$ are also out-neighbors of $u$.
\end{itemize}
Although the configuration in (N1) is not related to mainstream graph theory and (N2) has been considered marginally yet, 2-qBMGs are found to have relevant properties for structural graph theory; in particular, their undirected underlying graph is free
from induced paths and cycles of size at least $6$~\cite{akITAT}. 

An important feature of 2-qBMGs is that they form a hereditary class of directed graphs with respect to their induced subgraphs~\cite{korchmaros2023quasi}. Theorem~\ref{pro02112024} shows that the family of 2-qBMGs is also hereditary for automorphisms in the sense that for any color-preserving automorphism group $\Gamma$ of a 2-qBMG, the $\Gamma$-quotient is also a 2-qBMG. Thus, if $\{1\}=\Gamma_0\lhd \Gamma_1\lhd \cdots \lhd \Gamma_k=\aut_I(\dG)$ is a subnormal series of the color-preserving automorphism group $\aut_I(\dG)$ of a 2-qBMG $\dG$, then the factors are also color-preserving automorphism groups of 2-qBMGs. Therefore, a sequence $\dG_0,\ldots,\dG_{k-1}$ of 2-qBMGs arises such that $\dG_i$ is the $\Gamma_i$-quotient of $\dG_{i+1}$ and the factor $\Gamma_{i+1}/\Gamma_i$ is a color-preserving automorphism group of $\dG_i$. If the subnormal series is normal, that is, $\Gamma_i\lhd \aut_I(\dG)$ for $i=1,\ldots,k-1$, then $\dG_i$ is a $\Gamma_i$ quotient of $\dG$ and $\aut_I(\dG)/\Gamma_i$ is a color-preserving automorphism group of $\dG_i$. 

It is also useful to look at the ``classical quotient digraph'' $\partial\dG$  whose vertices are the equivalence classes of vertices of $\dG$ w.r.t. the usual equivalence relation $\dot{\sim}$ in the vertex-set. 
Theorem~\ref{pro04112024}, valid for any bipartite graphs, shows that $\partial \dG$ is a $\Gamma$-quotient, and the structure of the group $\Gamma$ is completely described in Section \ref{sec4}. In particular, Theorem~\ref{pro04112024} states that $\Gamma\cong {\rm{Sym}}_{X_1}\times\cdots \times {\rm{Sym}}_{X_k}$ where $V=X_1\dot{\cup}\ldots\dot{\cup} X_k$ is the partition of the vertex-set $V$ whose parts are the equivalency classes w.r.t. $\dot{\sim}$. Conversely, let $X_1,X_2,\ldots,X_k$ be pairwise distinct sets. In Section \ref{sec4}, a general example is described, which provides 2-qBMGs whose $\dot{\sim}$ equivalence classes can be identified by $X_1,X_2,\ldots,X_k$. Therefore, there are plenty of 2-qBMGs with large color-preserving automorphism groups; see Theorem~\ref{th30102024}. Proposition~\ref{pro30102024}, which is the main ingredient in the proof of Theorem~\ref{th30102024}, also states that $\Gamma$ is a normal subgroup of $\aut_I(\dG)$. Thus, the factor group $\aut_I(\dG)/\Gamma$ is isomorphic to a subgroup of $\aut_I(\partial \dG)$. 

Therefore, to gain insight into the structure of the color-preserving automorphism groups of bipartite graphs, one has to look inside $\aut_I(\partial \dG)$. Since $\partial \dG$ is thin, i.e., no two vertices of $\partial \dG$ are equivalent w.r.t. $\dot{\sim}$, this leads to the study of color-preserving automorphism group of thin bipartite graphs. 

In this paper, we focus on thin 2-qBMGs. Theorem~\ref{pro20102024B} shows that thinness imposes severe restrictions on the structure of a 2-qBMG. In particular, if $\dG$ is a thin 2-qBMG, and $\aut_I(\dG)$ has only two orbits, namely the vertices of the same color, then the structure of $\dG=\dG(U\cup V,E)$ is completely determined as $\dG$ is either the union of pairwise disjoint directed starts or the union of pairwise disjoint symmetric edges. If $\aut_I(\dG)$ has more orbits, then this occurs for any two orbits $U_1\subset U$ and $V_1\subset V$ provided that the induced subgraph on $U_1\cup V_1$ has some edges.  
On the other hand, Theorem~\ref{pro20102024B} suggests a general example technique for thin 2-qBMGs that consists of gluing digraphs, which are the union of pairwise disjoint directed starts; see Constructions~\ref{ex2310204},~\ref{ex2510204} and~\ref{ex:general_case}. These thin 2-qBMGs contain no symmetric edge and have large automorphism groups. They show that for any two integers $m,s\ge 2$ there exist proper, thin 2-qBMGs on $2ms$ vertices whose color-preserving automorphism group contains a subgroup isomorphic to ${\rm{Sym}}_m$; see Theorem~\ref{the2410204}. So far, the only known examples of 2-qBMGs with large automorphism groups have been the complete bipartite digraphs $\overrightarrow{K}_{r,s}$ with color classes of size $r$ and $s$, respectively.  $\overrightarrow{K}_{r,s}$  is a 2-qBMG with only symmetric edges whose color-preserving automorphism group $\aut_I(\overrightarrow{K}_{r,s})$ is isomorphic to the direct product ${\rm{Sym}}_r\times {\rm{Sym}}_s$. 

In Section~\ref{oa}, we point out the particular role of symmetric edges in studying the automorphism groups of 2-qBMGs. Let $S$ consist of all vertices of a 2-qBMG shared by at least two symmetric edges. Then, every connected component of the subgraph induced by $S$ is a complete bipartite digraph~\cite[Theorem 4.16]{korchmaros2021structure}, and $\dG$, the induced digraph 2-qBMG on the complement of $S$, has the following property:  
\begin{itemize}
\item[($*$)] no two  symmetric edges of $\dG$  have a common endpoint.
\end{itemize}
Furthermore, any automorphism of a 2-qBMG leaves $S$ invariant; therefore, it is also an automorphism of $\dG$. From previous work on 2-qBMGs, it has emerged that 2-qBMGs $\dG$ satisfying $(*)$ have useful additional properties, such as the acyclicity of their orientations $O(\dG)$~\cite{korchmaros2021structure,korchmaros2023quasi}. Theorem \ref{thm:orientation2qBMG} shows that any orientation of a 2-qBMG satisfying $(*)$ is still a 2-qBMG. Furthermore, there is an orientation $O(\dG)$ consistent with automorphisms, i.e. $\aut(\dG)=\aut(O(\dG))$; see Theorem \ref{th06112024}. Therefore, the automorphisms of 2-qBMGs may be studied on 2-qBMGs free from symmetric edges.   

\section{Background} 
Our notation and terminology are standard; see \cite{babai1995automorphism,Biggs_1974,dobson2022symmetry}. 
\subsection{Graph theory}

In this paper, $\dG=(V,E)$ always denotes a digraph (directed graph) without loops and multiple edges, but digraphs without any edge are admitted. With the usual notation, $V=V(\dG)$ is its vertex-set, $E=E(\dG)$ is its edge-set where for $u,v\in V$, the edge with tail $u$ and head $v$ is denoted by $uv$ or $[u,v]$. 
For any vertex $v\in V$, $N^+(v)$ and $N^-(v)$ stand for the set of the out-neighbors and in-neighbors of $v$, respectively. A {\emph{symmetric edge}} is a pair of edges such that both $uv,vu \in E(\dG)$ hold. For a vertex $v$ of $\dG$, $v$ is a \emph{sink} if $N^+(v)=\emptyset$, and a \emph{source} if $N^-(v)=\emptyset$. In this paper, $G$ denotes an undirected graph, and $uv$ with $u,v\in V(G)$ is an undirected edge of $G$.

A digraph is \emph{oriented} if it does not have a symmetric edge. An oriented digraph has a \emph{topological vertex ordering} if its vertices can be labeled $u_1,u_2,\ldots,u_n$ such that for any edge $u_iu_j$ we have $i< j$. A vertex $u$ is \emph{minimal} for a topological vertex ordering of $\dG$ if $N^{-}(u)=\emptyset$; maximality for vertices is defined analogously. An \emph{orientation} $\overrightarrow{O}$ of $\dG$ is an oriented digraph obtained from $\dG$ by keeping the same vertex set but retaining exactly one edge from each symmetric edge. From the definition, $E(\dO)\subseteq E(\dG)$ whereas $V(\dG)=V(\dO)$.

A \emph{bipartite digraph} $\dG=\dG(V,E)$ is a digraph whose vertices are partitioned into two subsets (partition sets) such that the endpoints of every edge fall in different subsets. These two subsets $U$ and $W$ may be viewed as the color classes of a proper vertex coloring of two colors in the sense that no edge has its endpoints colored of the same color. A bipartite graph is \emph{balanced} if $|U|=|W|$.

Let $\dG=\dG(V,E)$ be any digraph. Take a vertex partition $V_1\dot{\cup} \cdots \dot{\cup} V_k$ of $V$.  The associated {\emph{quotient graph}}  is the digraph $\dG^*$ whose vertices are the parts $V_1,\ldots, V_k$ and $V_iV_j$ is an edge of $\dG^*$ if there exist $v_i\in V_i$ and $v_j\in V_j$ such that $v_iv_j\in E$.
For two digraphs $\dG$ and $\dG^*$, a \emph{homomorphism} is a surjective map $\varphi$ from $V(\dG)$ to $V(\dG^*)$ that preserves the edges. In this case, $\dG^*$ is the \emph{quotient graph} of $\dG$ w.r.t. the map $\varphi$.    

The equivalence relation $\dot{\sim}$ in a digraph $\dG$ is introduced by defining $x{\dot{\sim}}y$ whenever the vertices $x$ and $y$ have the same out-neighbors and in-neighbors. This gives rise to a particular quotient graph $\partial{\dG}$, the \emph{classical quotient digraph} of $\dG$, whose vertices are the equivalence classes w.r.t. the relation $\dot{\sim}$, and edges are as follows: Let $\bar{u}$ and $\bar{v}$ be two vertices of $\partial\dG=\partial\dG(\bar{V},\bar{E})$, then  $\bar{u}\bar{v}\in \bar{E}$ whenever $uv\in E$ for some (and hence for all) $u\in \bar{u}$ and $v\in\bar{v}$. If $\dG$ contains isolated vertices, then they form a unique equivalence class $C_0$, and hence $\partial \dG$ has a unique isolated vertex. If $\dG=\dG(V,E)$ is bipartite with color classes $U$ and $W$, then each equivalence class other than $C_0$ is contained in either $U$ or $W$. Therefore, $\partial\dG$ is also bipartite. An important property of bipartite digraphs $\dG$ is that $\partial{\dG}$ is thin, i.e. it contains no two distinct equivalent vertices, in other words $\partial({\partial{\dG}})=\partial{\dG}$.

The underlying undirected graph $G$ of $\dG$ has the same vertex set of $\dG$ whereas $xy\in E(G)$ if and only if either $xy\in E(\dG)$ or $yx\in E(\dG)$  for any two vertices $x,y\in V(G)$.

\subsection{Graphs and Groups}
An \emph{automorphism}  of a digraph $\dG=\dG(V,E)$ is a permutation $\pi$ on $V$ such that $xy\in E$ implies $\pi(x)\pi(y)\in E$ for any two vertices $x,y\in V$. The automorphisms of $\dG$ form a subgroup of the symmetric group $\rm{Sym}_V$ on $V$.  This group is the \emph{automorphism group} of $\dG$ and is denoted by $\aut(\dG)$. Each subgroup of $\aut(\dG)$ is called an automorphism group of $\dG$. For an undirected graph, $G=G(V,E)$, automorphisms and automorphism groups are defined analogously. 

If $\dG$ is a non-connected digraph and $\dG_1,\ldots, \dG_k$ are its connected components, then $\aut(\dG)$ has a subgroup isomorphic to $\aut(\dG_1)\times \cdots \times \aut(\dG_k)$.

If $\dG$ is connected and vertex-colored with two colors,  then an automorphism $h$ of $\dG$ is either  \emph{color-preserving}, or  \emph{color-switching} according as $h$ maps every vertex to another of the same, or opposite color, respectively; see \cite[Lemma 3.1]{MSM}. The color-preserving automorphisms of any bipartite graph $\dG$ form a subgroup $\aut_I(\dG)$ of $\aut(\dG)$. If $\dG$ is connected then either $\aut_I(\dG)=\aut(\dG)$ or $\aut_I(\dG)$ has index $2$ in $\dG$.  The hypothesis of being connected in the latter claim is necessary. 

For any automorphism group $H$ of $\dG$, the $H$-orbit $x^H$ of a vertex $x\in V(\dG)$ consists of all vertices $y\in V(\dG)$ such that there exists an automorphism $h\in H$ with $y=h(x)$. Since $y\in x^H$ implies $x\in y^H$, the $H$-orbits form a partition on $V(\dG)$. If two vertices $x,y$ are in the same $H$-orbits then $|N^+(x)|=|N^+(y)|$ and $|N^-(x)|=|N^-(y)|$, but the converse does not hold in general.

Let $\dG$ be a bipartite digraph with color classes $U$ and $W$. Take a partition for each color class, say $U=U_1\dot{\cup}\cdots \dot{\cup} U_r$ and $W=W_1\dot{\cup}\cdots \dot{\cup} W_s$, respectively. Let $\mathcal{C}(\dG)$ be the arising quotient graph. For a subgroup $\Gamma$ of $\aut_I(\dG)$, if all $U_i$ and $W_j$ are $\Gamma$-orbits then $\mathcal{C}(\dG)$ is denoted by $\dG/\Gamma$, and in this case $\dG$ is a  \emph{$\Gamma$-cover} of $\dG/\Gamma$ and  $\dG/\Gamma$ is the \emph{$\Gamma$-quotient} of $\dG$. If $\Gamma$ is a normal subgroup of $\aut_I(\dG)$ then $\dG/\Gamma$ has an automorphism group isomorphic to the factor group $\aut_I(\dG)/\Gamma$. If this is the case, then $\aut(\dG)/\Gamma$ is the \emph{inherited} automorphism group of $\dG/\Gamma$. 
For an intermediate subgroup $\Sigma$ between $\Gamma$ and $\aut_I(\dG)$, let $\dG/\Sigma$ be the associated $\Sigma$-cover. Then $\dG/\Sigma$ is also a $\Sigma$-cover of $\dG/\Gamma$. However, it should be stressed that the above claims do not hold for non-color-preserving automorphism groups chosen to play the role of $\Gamma$, as their quotients may not be bipartite graphs.

Every automorphism of $\dG$ is also an automorphism of its underlying undirected graph $G$, but the converse is false. In other words, $\aut(\dG)\le \aut(G)$ and equality does not generally hold.

\subsection{Two-quasi-best match graphs} 
It may happen that one or more of the defining axioms trivially hold in a 2-qBMG. If this is the case, then $\dG$ is (Ni)-\emph{trivial} for $1\le i \le 3$ if the hypotheses in axiom (Ni) are satisfied trivially. More precisely, $\dG$ is (N1)-trivial if there exists no quadruple  $(u,v,w,t)$ of vertices with $ut,vw,tw\in E(\dG)$; 
$\dG$ is (N2)-trivial if there exists no quadruple of vertices with $uv,vw,wt\in E(\dG)$, and $\dG$ is (N3)-trivial if there exist no two distinct vertices with a common out-neighbor. If $\dG$ is (N3)-trivial, then it is also 
(N3*)-\emph{trivial}. If $\dG$ is non-trivial for (N2), then it is not trivial for (N1) and (N3), as well, and $\dG$ is a \emph{proper} 2-qBMG. 
In the extreme case, i.e. if all three defining properties hold trivially, $\dG$ is (N)-\emph{trivial}.

For four vertices $x_1,x_2,x_3,y$ of $\dG$, we say that $[x_1,x_2,x_3,y]$ is an (N1)-\emph{configuration} if  $x_1x_2,x_2x_3,yx_3 \in E(\dG)$ implies either $x_1y\in E(\dG)$ or $yx_1\in E(\dG)$; in other words when condition (N1) holds for $u=x_1,t=x_2,w=x_3,v=y$ and therefore $x_1$ and $y$ are not independent.

Orientations of a 2-qBMG which is either thin, or enjoys property $(*)$, are acyclic and hence have a topological ordering. An approach to sink-free 2-qBMGs (also called 2-BMGs), which is based on topological orderings of their orientations, is worked out in~\cite{korchmaros2021structure}, see also~\cite{korchmaros2023quasi}.

\section{\texorpdfstring{$\Gamma$}-quotients of 2-qBMGs}
\label{seclarge}
The relevance of $\Gamma$-quotients in studying 2-qBMGs depends on the following hereditary property.
\begin{theorem}
\label{pro02112024} 
If $\dG$ is a 2-qBMG and $\Gamma$ is a subgroup of $\aut_I(\dG)$ then the $\Gamma$-quotient $\dG/\Gamma$ is also a 2-qBMG. 
\end{theorem}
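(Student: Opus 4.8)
The plan is to verify directly that $\dG/\Gamma$ satisfies axioms (N1), (N2), and (N3), using the key fact that vertices identified in the quotient belong to the same $\Gamma$-orbit, hence have out- and in-neighborhoods of equal cardinality, and — more importantly — are mapped to one another by automorphisms of $\dG$. Throughout, for a vertex $\bar x$ of $\dG/\Gamma$, I write $X$ for the corresponding $\Gamma$-orbit in $V(\dG)$; by definition $\bar x\bar y\in E(\dG/\Gamma)$ iff $x'y'\in E(\dG)$ for some $x'\in X$, $y'\in Y$, and then by applying a suitable $h\in\Gamma$ one may ``slide'' such an edge to start at any prescribed $x\in X$ (replacing $y'$ by $h(y')\in Y$). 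This ``edge-sliding'' observation, together with the fact that $\dG/\Gamma$ is still bipartite with color classes the unions of the orbits inside $U$ and inside $W$, is the main engine of the whole argument.

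First I would dispose of (N2), which is the easiest: suppose $\bar u\bar v,\bar v\bar w,\bar w\bar t\in E(\dG/\Gamma)$. Pick a representative edge $uv\in E(\dG)$. Since $\bar v\bar w$ is an edge, there is $w_1\in W$ (the orbit of $\bar w$) with $vw_1\in E(\dG)$; since $\bar w\bar t$ is an edge there is $w_2\in W$ and $t'\in T$ with $w_2t'\in E(\dG)$, and choosing $h\in\Gamma$ with $h(w_2)=w_1$ we get $w_1\,h(t')\in E(\dG)$ with $h(t')\in T$. Now $uv,vw_1,w_1h(t')\in E(\dG)$, so bi-transitivity of $\dG$ gives $u\,h(t')\in E(\dG)$, hence $\bar u\bar t\in E(\dG/\Gamma)$. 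For (N3), assume $\bar u$ and $\bar v$ have a common out-neighbor $\bar z$ in $\dG/\Gamma$; lifting and sliding as above I can find $u'\in U$, $v'\in V$, $z'\in Z$ with $u'z',v'z'\in E(\dG)$ — here I use that $\bar u\bar z$ and $\bar v\bar z$ being edges lets me pin the common head to a single vertex $z'$. Then (N3) for $\dG$ says (WLOG) $N^+_{\dG}(u')\subseteq N^+_{\dG}(v')$; I must upgrade this to $N^+_{\dG/\Gamma}(\bar u)\subseteq N^+_{\dG/\Gamma}(\bar v)$. Given $\bar y\in N^+_{\dG/\Gamma}(\bar u)$, there is $u''\in U$, $y'\in Y$ with $u''y'\in E(\dG)$; pick $h\in\Gamma$ with $h(u'')=u'$, so $u'\,h(y')\in E(\dG)$ with $h(y')\in Y$; then $h(y')\in N^+_{\dG}(u')\subseteq N^+_{\dG}(v')$, giving $v'\,h(y')\in E(\dG)$ and hence $\bar v\bar y\in E(\dG/\Gamma)$.

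The remaining axiom (N1) is where I expect the main obstacle, because its hypothesis involves \emph{independence} of two vertices, and a negation of an edge in the quotient is a much weaker statement than a negation of an edge in $\dG$: $\bar u,\bar v$ independent in $\dG/\Gamma$ only says that \emph{no} pair $(u',v')\in U\times V$ forms an edge of $\dG$ in either direction, which is actually the strong version I want, so one should restate carefully. Suppose $\bar u,\bar v$ are independent in $\dG/\Gamma$ and, for contradiction, there are $\bar w,\bar t$ with $\bar u\bar t,\bar v\bar w,\bar t\bar w\in E(\dG/\Gamma)$. I lift: there is $u'\in U,t'\in T$ with $u't'\in E(\dG)$; there is $t''\in T,w'\in W$ with $t''w'\in E(\dG)$, and $h\in\Gamma$ with $h(t'')=t'$ gives $t'\,h(w')\in E(\dG)$, $h(w')\in W$; finally $\bar v\bar w$ an edge gives $v''\in V,w''\in W$ with $v''w''\in E(\dG)$, and $g\in\Gamma$ with $g(w'')=h(w')$ gives $g(v'')\,h(w')\in E(\dG)$ with $g(v'')\in V$. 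Setting $u=u'$, $t=t'$, $w=h(w')$, $v=g(v'')$ we have $ut,vw,tw\in E(\dG)$ with $u\in U$, $v\in V$; since $U,V$ are the full orbits and $\bar u,\bar v$ are independent in the quotient, $u$ and $v$ are independent in $\dG$, contradicting (N1) for $\dG$. I would double-check the degenerate cases (e.g. $\bar u=\bar v$, or $\bar w=\bar t$, or coincidences among the lifted vertices) and confirm that the bipartite color classes behave as claimed; once the independence translation is pinned down, the argument is routine, so I would present (N1) last with that translation highlighted as the crux.
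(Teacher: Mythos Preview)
Your proposal is correct and follows essentially the same approach as the paper's own proof: lift each quotient edge to an edge of $\dG$, use elements of $\Gamma$ to align the representatives so that the relevant configuration (for (N1), (N2), or (N3)) is realized in $\dG$, apply the axiom there, and push the conclusion back down to $\dG/\Gamma$. Your ``edge-sliding'' observation is exactly the mechanism the paper uses (without naming it), and your handling of (N3) --- fixing representatives $u',v'$ with a common out-neighbor, then sliding an arbitrary out-neighbor of $\bar u$ onto $u'$ --- matches the paper's argument; the only cosmetic difference is that you phrase (N1) as a contrapositive (assume independence in the quotient, derive a forbidden configuration in $\dG$) whereas the paper phrases it directly via its ``(N1)-configuration'' terminology (assume the three edges in the quotient, deduce dependence).
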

\begin{proof} 
Let $E$ denote the edge-set of $\dG$, and $\mathcal{E}$ the edge-set of $\dG/\Gamma$. 
Take a (N1)-configura-\\tion $[U_1,W_1,U_2,W_2]$ in $\dG/\Gamma$.  By definition,  there exist $u_1\in U_1,w_1,w_1^*\in W_1, u_2,u_2^*\in U_2,w_2\in W_2$ such that 
$u_1w_1,w_1^*u_2,w_2u_2^*\in E$. Since $w_1,w_1^*\in W_1$, there is an automorphism $\gamma\in \Gamma$ which takes $w_1$ to $w_1^*$.  Then $u_2^+=\gamma^{-1}(u_2)\in U_2$, as $U_2$ is a $\Gamma$-orbit. Thus  $w_1u_2^+$ $=\gamma^{-1}(w_1^*)\gamma^{-1}(u_2)\in E$, as $\gamma\in \aut(\dG)$. Similarly, an automorphism $\gamma^*\in \Gamma$ takes $u_2$ to $u_2^+$. For $w_2^*=\gamma^*(w_2)$, we have $w_2^*u_2^+\in E$. 
Therefore $[u_1, w_1,u_2^+,w_2^*]$ is a (N1)-configuration in $\dG$, yielding either $u_1w_2^*$, or $w_2^*u_1\in E$. Hence, either $U_1W_2\in \mathcal{E}$, or $W_2U_1\in \mathcal{E}$. This shows that (N1) is satisfied by $\dG/\Gamma$. Analogous arguments can show that (N2) holds in $\dG/\Gamma$. 

Finally, take any two vertices of  $\dG/\Gamma$ that have a common out-neighbor: they are $u_1\in U_1,\,u_2\in U_2$ and there exist $w_1,w_1^*\in W_1$ such that $u_1w_1,u_2w_1^*\in E$. There is an automorphism $\gamma\in \Gamma$ taking $w^*$ to $w$. Let $u_2^*=\gamma(u_2)$. Then $w$ is a common neighbor of $u_1$ and $u_2^*$. Thus (N3) applies in $\dG$  yielding either $N^+(u_1)\subseteq N^+(u_2^*)$ or $N^+(u_2^*)\subseteq N^+(u_1)$.
In the former case, take $W_1\in N^+(U_1)$. Hence $u_1^*w_1^*\in E$ for some $u_1^*\in U_1$ and $w_1^*\in W_1$. Let $\gamma\in \aut(\dG)$ such that $\gamma(u_1^*)=u_1$. Then $u_1\gamma(w_1^*)=\gamma(u_1^*)\gamma(w_1^*)=\gamma(u_1^*w_1^*)\in E$. Hence $\gamma(w_1^*)\in N^+(u_1)\subseteq N^+(u_2)$, yielding $W_1\in N^+(U_2)$ and $N^+(U_1)\subseteq N^+(U_2)$. Analogous arguments  show that $N^+(U_2)\subseteq N^+(U_1)$ follows in the latter case.  Therefore, (N3) is satisfied by $\dG/\Gamma$.
\end{proof}

If we weaken the hypothesis $\Gamma$-quotient to that of any (not necessarily $\Gamma$-) quotient, Theorem~\ref{pro02112024} may not hold any longer.   
Indeed, the quotient bipartite digraph of the 2-qBMG in Figure~\ref{figura10} is not bi-transitive. 

Furthermore, if the $\Gamma$-quotient of a bipartite digraph $\dG$ is a 2-qBMG, it may be that $\dG$ itself is not a 2-qBMG, as shown in Figure~\ref{figura11}. Indeed, (N2) does not hold for $\dG$.  

Finally, if $\dG$ is a 2-qBMG and $\Gamma$ is chosen to be the full $\aut_I(\dG)$, then the associated $\Gamma$-quotient $\dG/\aut_I(\dG)$ has no non-trivial automorphism, in general. Nevertheless, there are counterexamples such as the 2-qBMG in Figure~\ref{figura9}. 
\begin{figure}[ht!]
\centering
\scalebox{0.45}
{\includegraphics{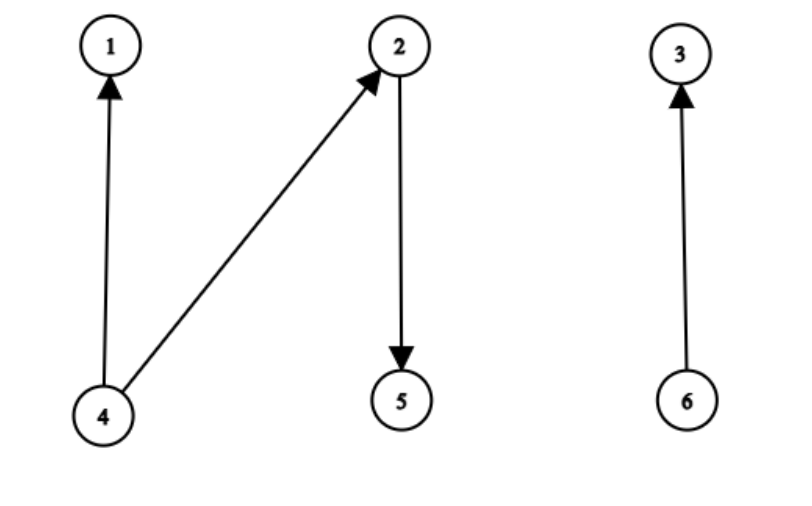}}
\caption{Non-bi-transitive quotient of 2-qBMG.  The graph $\dG$ is a 2-qBMG with color classes $U=\{1,2,3\}$ and $W=\{4,5,6\}$, and covering $U_1=\{1\},U_2=\{2\},U_3=\{3\}$, $W_1=\{4\}, W_2=\{5,6\}$. The quotient bipartite digraph $\dG_0$ has color classes $\mathcal{U}=\{U_1,U_2,U_3\}$ and $\mathcal{W}=\{W_1,W_2\}$ and edge set $\mathcal{E}=\{W_1U_1,W_1U_2,U_2W_2,W_2U_3\}$. $\dG_0$ does not satisfy (N2) as $W_1U_2,U_2W_2,W_2U_3
\in \mathcal{E}$ but $W_1U_3\not\in \mathcal{E}$.}
\label{figura10}
\end{figure}
\begin{figure}[ht!]
\centering
\scalebox{0.40}
{\includegraphics{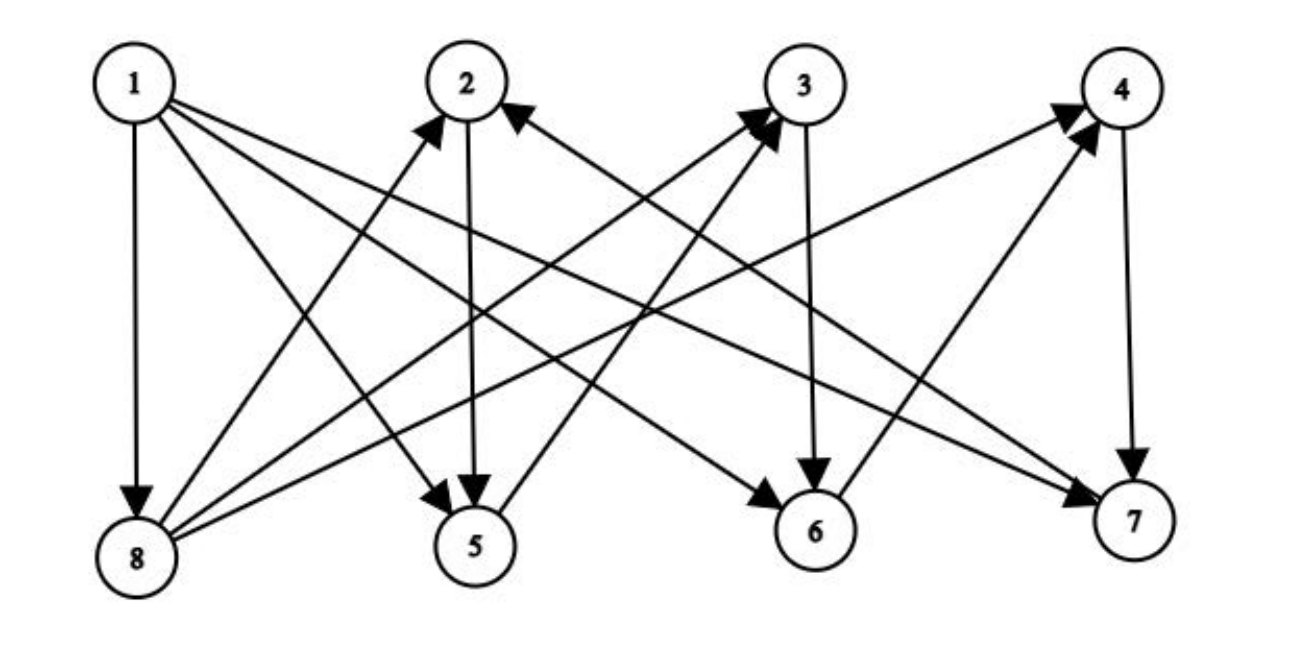}}
\caption{Non-bi-transitive digraph of 2-qBMG $\Gamma$-quotient. $\{1,2,3,4\}$ and $\{5,6,7,8\}$ are the color classes of $\dG$. $[2,6]\notin E$, (N2) does not hold in $\dG$.  $\aut_I(\dG)$ is a group of order $3$ and it has four orbits: $o_1=\{1\},o_2=\{8\},o_3=\{2,3,4\},o_4=\{5,6,7\}$. For $\Gamma=\aut_I(\dG)$, the $\Gamma$-covering is the 2-qBMG with color classes $\{o_1,o_3\},\,\{o_2,o_4\}$ and edge-set $\{[o_1,o_2],[o_2,o_3],[o_3,o_4],[o_1,o_4]\}$.}
\label{figura11}
\end{figure}
\begin{figure}[ht!]
\centering
\scalebox{0.55}
{\includegraphics{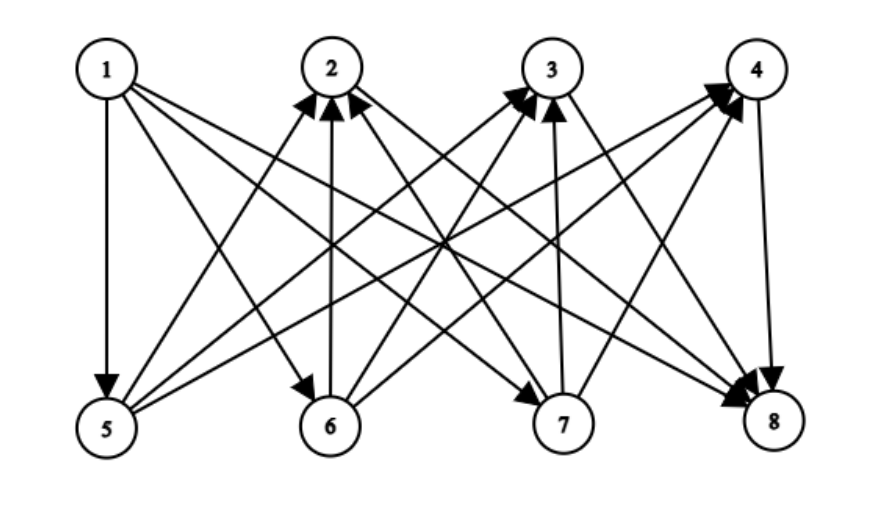}}
\caption{Non-trivial automorphisms for $\Gamma$-quotient. $\{1,2,3,4\}$ and $\{5,6,7,8\}$ are the color classes of $\dG$. $|\aut_I(\dG)|=36$ and $\aut_I(\dG)$ has four orbits: $o_1=\{1\}$, $o_2=\{8\}$, $o_3=\{2,3,4\}$ and $o_4=\{5,6,7\}$. The quotient $\dG/\aut_I(\dG)$ is a 2-qBMG on $\{o_1,o_2,o_3,o_4\}$ with color classes $\{o_1,o_3\}$ and $\{o_2,o_4\}$, and edge-set  $\{o_1o_2,o_1o_4,o_3o_2,o_4o_3\}$. $\aut_I(\dG/\aut_I(\dG))$ contains the involutory permutations $(o_1o_3),(o_2,o_4)$.}
\label{figura9}
\end{figure}

\section{Automorphism groups of digraphs containing equivalent vertices}
\label{sec4}
We show a useful result on automorphism groups of bipartite digraphs that contain equivalent vertices. The first is a straightforward consequence of the definition of equivalency $\dot{\sim}$ and the notion of an automorphism.  

\begin{lemma}
\label{lem30102024} For a vertex $x$ of a bipartite digraph $\dG$ on $n$ vertices, let $X$ denote the set of all vertices of $\dG$ which are equivalent to $x$. Let $H$ be the subgroup of ${\rm{Sym}}_n$ such that
\begin{itemize}
\item[(i)] $H$ leaves $X$ invariant;
\item[(ii)] $H$ acts on $X$ as the symmetric group on $|X|$ labels;  
\item[(iii)] $H$ fixes every vertex off $X$.
\end{itemize}
Then $H$ is an automorphism group of $\dG$.     
\end{lemma}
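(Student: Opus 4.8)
The plan is to realize $H$ as a subgroup of $\aut(\dG)$ by checking the defining condition on a generating set. First I would record that, by (i)--(iii), $H$ is exactly the set of permutations in ${\rm{Sym}}_n$ that fix every vertex of $V(\dG)\setminus X$ and stabilize $X$ setwise; in particular $H\cong {\rm{Sym}}_{X}$, and $H$ is generated by the transpositions $\tau_{a,b}$ that interchange two vertices $a,b\in X$ and fix every other vertex of $\dG$. Since the automorphisms of $\dG$ form a group, hence a set closed under composition, it suffices to prove that each such $\tau_{a,b}$ lies in $\aut(\dG)$.

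Next I would fix $a,b\in X$ and put $\pi=\tau_{a,b}$. The key preliminary observation is that $ab\notin E$ and $ba\notin E$: indeed $a\,\dot{\sim}\,b$ gives $N^+(a)=N^+(b)$ and $N^-(a)=N^-(b)$, so $ab\in E$ would force $b\in N^+(a)=N^+(b)$, i.e.\ a loop at $b$, which is excluded; the case $ba\in E$ is symmetric. Consequently no edge of $\dG$ has both endpoints in $\{a,b\}$. Now take any edge $uv\in E$ and verify $\pi(u)\pi(v)\in E$ by cases on $\{u,v\}\cap\{a,b\}$: if this intersection is empty, then $\pi$ fixes both $u$ and $v$ and there is nothing to prove; if $u=a$ (so $v\notin\{a,b\}$), then $v\in N^+(a)=N^+(b)$, hence $\pi(u)\pi(v)=bv\in E$, and symmetrically for $u=b$; if $v=a$ (so $u\notin\{a,b\}$), then $u\in N^-(a)=N^-(b)$, hence $\pi(u)\pi(v)=ub\in E$, and symmetrically for $v=b$. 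Thus $\pi(E)\subseteq E$; since $\pi$ is a bijection of the finite set $V(\dG)$ we get $|\pi(E)|=|E|$, so $\pi(E)=E$ and $\pi\in\aut(\dG)$. As the transpositions $\tau_{a,b}$ generate $H$, this yields $H\le\aut(\dG)$, i.e.\ $H$ is an automorphism group of $\dG$.

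I do not expect a genuine obstacle here: the content of the argument is just the remark that permuting mutually $\dot{\sim}$-equivalent vertices cannot be detected by the edge relation. The only point that needs a moment's care is the no-loop observation above, which rules out an edge inside $X$ and makes the case analysis exhaustive; the bipartiteness hypothesis is not actually used in the proof and is present only for the setting in which the lemma will later be applied.
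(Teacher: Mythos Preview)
Your proof is correct and follows essentially the same idea as the paper's: both arguments show that permuting vertices within an equivalence class $X$ preserves edges because equivalent vertices share the same in- and out-neighbors, and every edge has at most one endpoint in $X$. The only cosmetic differences are that you reduce to generating transpositions while the paper treats a general $h\in H$ directly, and you correctly note that bipartiteness is not actually needed (the paper invokes it to place $X$ inside one color class, whereas you derive the ``no edge inside $X$'' fact from the absence of loops).
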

\begin{proof}  Let $U$ and $W$ be the color classes of $\dG$ where $x\in U$. Since $x$ is not an isolated vertex, any vertex in $X$
is in $U$. Take a vertex $w\in W$ in $N^+(x)$. Since $w\not\in X$, every automorphism $h\in H$ fixes $w$. Therefore the edge $xw$ of $\dG$ is mapped by  $h$ to $h(x)w$. Since $x\dot{\sim} h(x)$, we have that $h(x)w$ is also an edge of $\dG$. The same argument shows that if $w\in N^-(x)$, then $wh(x)$ is an edge of $\dG$. Since $h$ fixes each vertex off $X$, the claim follows.   
\end{proof}

For a bipartite digraph $\dG=\dG(V,E)$, let $V=X_1\cup\ldots\cup X_k$ be the partition of $V$ whose members are the equivalency classes w.r.t. the relation $\dot{\sim}$. For each $X_i$ let $H_i$ be the automorphism group of $\dG$ as defined in Lemma~\ref{lem30102024}.    
\begin{proposition}
\label{pro30102024} The subgroup $\Gamma=\langle H_1,\ldots,H_K\rangle$ of $\aut(\dG)$ which is generated by $H_1,\ldots,H_k$ is the direct product $H_1\times\cdots \times H_k$ and $\Gamma$ is a normal subgroup of $\aut(\dG)$. 
\end{proposition}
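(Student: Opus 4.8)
The plan is to verify the two assertions separately: first that $\Gamma=\langle H_1,\dots,H_k\rangle$ is the internal direct product $H_1\times\cdots\times H_k$, and second that this $\Gamma$ is normal in $\aut(\dG)$. For the direct-product claim, I would first observe that distinct classes $X_i$ and $X_j$ are disjoint (they are blocks of the partition induced by $\dot\sim$), so by property (iii) of Lemma~\ref{lem30102024} every element of $H_i$ fixes $X_j$ pointwise and vice versa; hence elements of $H_i$ and $H_j$ commute, since they act non-trivially on disjoint supports. Next I would check that $\Gamma=H_1H_2\cdots H_k$ as a set: since the $H_i$ pairwise commute, the product set is already a subgroup, so it equals the group they generate. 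Finally, for the internal-direct-product decomposition I need $H_i\cap\bigl(\prod_{j\neq i}H_j\bigr)=\{1\}$; this follows because an element of $\prod_{j\neq i}H_j$ fixes $X_i$ pointwise (each factor does, by (iii)), whereas a non-identity element of $H_i$ moves some vertex of $X_i$. Putting these three facts together gives $\Gamma\cong H_1\times\cdots\times H_k$.

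For normality, I would take an arbitrary $\varphi\in\aut(\dG)$ and show $\varphi\Gamma\varphi^{-1}=\Gamma$; since $\Gamma$ is generated by the $H_i$, it suffices to show $\varphi H_i\varphi^{-1}\subseteq\Gamma$ for each $i$. The key structural point is that any automorphism of $\dG$ permutes the equivalence classes of $\dot\sim$: if $x\dot\sim y$, then $x$ and $y$ have the same in- and out-neighbors, and applying $\varphi$ (which preserves adjacency in both directions) shows $\varphi(x)$ and $\varphi(y)$ have the same in- and out-neighbors, so $\varphi(x)\dot\sim\varphi(y)$; thus $\varphi(X_i)=X_{\sigma(i)}$ for some permutation $\sigma$ of $\{1,\dots,k\}$. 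Now for $h\in H_i$, the conjugate $\varphi h\varphi^{-1}$ leaves $X_{\sigma(i)}$ invariant, acts on it as some permutation of its labels, and fixes every vertex outside $X_{\sigma(i)}$ (because $h$ fixes everything outside $X_i$ and $\varphi$ carries the complement of $X_i$ to the complement of $X_{\sigma(i)}$). By the maximality built into the definition of $H_{\sigma(i)}$ in Lemma~\ref{lem30102024}---it is the \emph{full} group of permutations with those three properties---we conclude $\varphi h\varphi^{-1}\in H_{\sigma(i)}\subseteq\Gamma$. Hence $\varphi\Gamma\varphi^{-1}\subseteq\Gamma$, and since $\varphi$ was arbitrary this forces equality, so $\Gamma\lhd\aut(\dG)$.

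The routine parts---pairwise commuting, trivial intersections, the product set being a subgroup---are standard internal-direct-product bookkeeping and I would dispatch them quickly. The one step deserving genuine care, and the main obstacle, is the claim that every automorphism permutes the $\dot\sim$-classes and that conjugation sends $H_i$ into $H_{\sigma(i)}$ rather than into something merely containing $H_{\sigma(i)}$; this hinges on reading the definition in Lemma~\ref{lem30102024} as specifying the \emph{maximal} such group (the symmetric group on the labels of $X_i$, fixing all else), so that $\varphi h\varphi^{-1}$, which provably has all three defining properties relative to $X_{\sigma(i)}$, must actually lie in $H_{\sigma(i)}$. Once that identification is made, normality is immediate.
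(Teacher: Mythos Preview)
Your proposal is correct and follows essentially the same approach as the paper's proof: disjoint supports give pairwise commutation and hence the direct-product structure, and normality comes from the observation that any automorphism permutes the $\dot\sim$-classes, so conjugation by $\varphi$ sends $H_i$ into $H_{\sigma(i)}$. The only cosmetic differences are that you verify the stronger intersection condition $H_i\cap\prod_{j\ne i}H_j=\{1\}$ (the paper checks only pairwise $H_i\cap H_j=\{1\}$, which is technically insufficient for an internal direct product, though the gap is easily filled), and that for the inclusion $\varphi H_i\varphi^{-1}\subseteq H_{\sigma(i)}$ you invoke the maximality of $H_{\sigma(i)}$ as the full symmetric group on $X_{\sigma(i)}$, whereas the paper shows the same inclusion and then upgrades it to equality via $|H_i|=|H_{\sigma(i)}|$.
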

\begin{proof} From the definition, $h_ih_j=h_jh_i$ for $h_i\in H_i,\,h_j\in H_j$ whence $H_iH_j=H_jH_i$. Furthermore, $H_i\cap H_j=\{1\}$ for $1\le i,j \le k$. Therefore $\langle H_1,\ldots,H_K\rangle=H_1\times \cdots \times H_k$. Take $g\in \aut(\dG)$ and $H_i$ for $1\le i \le k$. We have to show that $g\circ H_i\circ g^{-1}\in \{H_1,\ldots,H_k\}$ for any $1\le j \le k$. Since $g$ takes equivalent vertices to equivalent vertices, if there exists $\xi_j\in H_j$ such that $g(\xi_j)\in H_i$ then for every $x_j\in X_j$ we have $g(x_j)\in X_i$. From this $|H_j|=|H_i|$, as $g$ is a bijection on $V$.
Moreover, for $h_i \in H_i$, since $x'_i\in X_i$, $$g\circ h_i\circ g^{-1}(x_j)=(g\circ h_i)(x_i)=g(h_i(x_i))=g(x'_i)\in X_j.$$  This shows that $g\circ H_i \circ g^{-1}$ leaves $X_j$ invariant. Furthermore, for $x_m\in X_m$ with $m\ne j$, we have $x_t=g^{-1}(x_m)\notin X_i$; in particular $h_i(x_t)=x_t$. 
Therefore, $g\circ h_i\circ g^{-1}(x_m)=(g\circ h_i)(x_t)=g(h_i(x_t))=g(x_t)=x_m$. This shows that $g\circ H_i \circ g^{-1}$ fixes any vertex outside $X_j.$ Then, by definition, $g\circ H_i \circ g^{-1}\le H_j.$  Since $|H_i|=|H_j|$, this yields $g\circ H_i \circ g^{-1}= H_j$. Finally, let $g\in\aut_I(\dG)$ such that $g$ preserves $X_i$ for $i=1,2,\ldots, k$. Then $g$ induces a permutation on $X_i$, and hence there exists $h_i\in H_i$ such that $g(x_i)=h_i(x_i)$ for every $x_i\in X_i$. With these $h_i$ for $i=1,2,\ldots,k$, we have $g=h_1h_2\cdots h_k$. 
\end{proof}
The following result is obtained since the orbits of $\Gamma$ are exactly the equivalence classes w.r.t. the relation $\dot{\sim}$. 
\begin{theorem}
\label{pro04112024} If $\dG$ is a bipartite digraph, then its quotient digraph $\partial\dG$ is the $\Gamma$-quotient of $\dG$ with $\Gamma=H_1\times\cdots \times H_k$.
\end{theorem}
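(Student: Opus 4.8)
The plan is to identify the $\Gamma$-orbits on $V(\dG)$ with the equivalence classes $X_1,\ldots,X_k$, and then to observe that $\dG/\Gamma$ and $\partial\dG$ are, by their very definitions, the quotient graph built on this same vertex partition with the same edge rule. For the first part, recall from Proposition~\ref{pro30102024} that $\Gamma=H_1\times\cdots\times H_k$, so a generic element of $\Gamma$ is a product $h_1h_2\cdots h_k$ with $h_j\in H_j$. Fix $i$ and a vertex $x\in X_i$. By part~(iii) of Lemma~\ref{lem30102024}, each $h_j$ with $j\ne i$ fixes $x$, while $h_i(x)\in X_i$ by part~(i); hence the product maps $x$ into $X_i$, so $X_i$ is $\Gamma$-invariant. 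Conversely $H_i$ alone acts on $X_i$ as the full symmetric group on $|X_i|$ labels (part~(ii)), so $\Gamma$ is already transitive on $X_i$. Therefore the $\Gamma$-orbits are exactly $X_1,\ldots,X_k$.

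Next I would check that $\dG/\Gamma$ is actually defined and that it coincides with $\partial\dG$. As recalled earlier, in a bipartite digraph every $\dot{\sim}$-class other than the class $C_0$ of isolated vertices lies inside a single color class; hence each $H_i$ attached to a non-isolated class permutes vertices within one color class and fixes all others, so $\Gamma\le\aut_I(\dG)$ (when isolated vertices of both colors are present, one splits $C_0$ into its two color parts). Thus the $\Gamma$-orbits $X_1,\ldots,X_k$ are precisely the parts $U_1,\ldots,U_r,W_1,\ldots,W_s$ of the color classes that enter the construction of $\mathcal{C}(\dG)$, so $\dG/\Gamma=\mathcal{C}(\dG)$ exists and has vertex set $\{X_1,\ldots,X_k\}$, exactly like $\partial\dG$. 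For the edges, $X_iX_j$ is an edge of $\dG/\Gamma$ iff $v_iv_j\in E$ for some $v_i\in X_i$, $v_j\in X_j$, while $X_iX_j$ is an edge of $\partial\dG$ iff $uv\in E$ for some $u\in X_i$, $v\in X_j$ — the same condition, and moreover, for $\dot{\sim}$-classes ``some such pair is an edge'' and ``every such pair is an edge'' agree. Hence $\dG/\Gamma=\partial\dG$.

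I do not expect a genuine obstacle: once Step~1 has computed the $\Gamma$-orbits, the statement is a matching of two definitions. The only point that needs a little care is the status of isolated vertices in the definition of $\partial\dG$, since a class $C_0$ meeting both color classes is not preserved color-wise and would force $\Gamma\not\le\aut_I(\dG)$; this is a harmless bookkeeping issue (absent as soon as $\dG$ has an edge in each component, or handled by splitting $C_0$ by color) and does not affect the substance of the statement.
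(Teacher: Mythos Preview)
Your proposal is correct and follows exactly the paper's approach: the paper's entire proof is the single observation that ``the orbits of $\Gamma$ are exactly the equivalence classes w.r.t.\ the relation $\dot{\sim}$,'' which is precisely your Step~1, after which the identification of $\dG/\Gamma$ with $\partial\dG$ is treated as immediate. You have simply supplied the details (and the caveat about isolated vertices) that the paper leaves implicit.
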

The converse of Proposition \ref{pro30102024} also holds.  
\begin{proposition}
\label{pro05112924} Let $\dG$ be a bipartite digraph with color classes $U,W$ and partitions $U=X_1\cup\cdots\cup X_r$,  
$W=X_{r+1}\cup\cdots\cup X_k$. For $1\le i \le k$ suppose that $\aut_I(\dG)$ has a subgroup $H_i\cong {\rm{Sym}}_{X_i}$ that satisfies (i),(ii), (iii) of Lemma \ref{lem30102024} for $X=X_i$ with $1\le i \le k$. Then each  $X_1,\ldots,X_k$ consists of pairwise equivalent vertices and $\Gamma=\langle H_1,\ldots,H_k\rangle=H_1\times \cdots \times H_k$. 
\end{proposition}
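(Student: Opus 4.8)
The plan is to argue that each $X_i$ is a single $\dot{\sim}$-class and then to invoke Proposition~\ref{pro30102024} to conclude the group-theoretic statement. The core observation is that the three conditions (i)–(iii) of Lemma~\ref{lem30102024}, when satisfied by a subgroup $H_i\cong\mathrm{Sym}_{X_i}$, force all vertices of $X_i$ to have identical out- and in-neighborhoods. First I would fix $i$ and take two distinct vertices $x,y\in X_i$. Since $H_i$ acts on $X_i$ as the full symmetric group on $|X_i|$ labels, there is an automorphism $h\in H_i$ with $h(x)=y$. Now let $w\in N^+(x)$ be any out-neighbor of $x$; because $\dG$ is bipartite with $X_i$ contained in one color class, $w$ lies in the other color class, hence $w\notin X_i$, and so by (iii) $h$ fixes $w$. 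Applying $h$ to the edge $xw$ gives $yw=h(x)h(w)\in E$, i.e. $w\in N^+(y)$. Thus $N^+(x)\subseteq N^+(y)$, and by symmetry (swap the roles of $x$ and $y$, using $h^{-1}$) we get $N^+(x)=N^+(y)$. The same argument applied to in-neighbors $w\in N^-(x)$, which again lie outside $X_i$ and are therefore fixed by $h$, yields $N^-(x)=N^-(y)$. Hence $x\dot{\sim}y$, so all vertices of $X_i$ are pairwise equivalent.

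Next I would check that each $X_i$ is not merely contained in, but equal to, a $\dot{\sim}$-class — i.e.\ that the $X_i$ are exactly the equivalence classes, not a proper refinement. Since $\{X_1,\ldots,X_k\}$ is a partition of $V$ and each part consists of pairwise equivalent vertices, each $X_i$ is contained in a unique $\dot{\sim}$-class; but distinct $X_i,X_j$ cannot lie in the same $\dot{\sim}$-class, since the equivalence classes partition $V$ and the $X_i$ already partition $V$, so a class containing $X_i\cup X_j$ with $i\ne j$ would be strictly larger than a part and still a union of parts — this is consistent with the partition only if each class is a single part. (If one wants the $X_i$ to be precisely the full classes, one uses that a class is a union of whole parts and that any strictly larger union would contradict no further constraint unless additional hypotheses are assumed; in the statement as phrased it suffices that each $X_i$ consists of pairwise equivalent vertices, which is what we have shown.) In any case, the relevant point for the group statement is that the $H_i$ are exactly the subgroups produced by Lemma~\ref{lem30102024} for the sets $X_i$.

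Finally, with each $X_i$ established as a set of pairwise equivalent vertices and $H_i$ the associated symmetric-group-on-$X_i$ subgroup fixing everything off $X_i$, the subgroups $H_i$ and $H_j$ with $i\ne j$ act on disjoint supports, so they commute elementwise and intersect trivially; therefore $\Gamma=\langle H_1,\ldots,H_k\rangle=H_1\times\cdots\times H_k$. This is precisely the content of the first part of Proposition~\ref{pro30102024}, which I would simply cite (its proof only used that the $X_i$ are the equivalence classes and the $H_i$ are the Lemma~\ref{lem30102024} groups, both now verified). The main obstacle, and the only genuinely new step, is the first paragraph: extracting $N^+(x)=N^+(y)$ and $N^-(x)=N^-(y)$ from the abstract hypotheses (i)–(iii). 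The crucial structural input making this clean is bipartiteness of $\dG$, which guarantees that every neighbor of a vertex in $X_i$ lies outside $X_i$ and is hence fixed by every $h\in H_i$; without bipartiteness the argument would break, since an automorphism in $H_i$ could move a neighbor lying inside $X_i$ and the edge-image argument would no longer directly give $w\in N^+(y)$.
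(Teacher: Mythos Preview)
Your proof is correct and follows essentially the same route as the paper: pick $h\in H_i$ sending $x$ to $y$, observe that any neighbor $v$ of $x$ lies in the opposite color class and hence outside $X_i$, so $h$ fixes $v$ and the edge $xv$ is carried to $yv$; this gives $N^\pm(x)\subseteq N^\pm(y)$ and by symmetry equality. Your second paragraph, trying to show each $X_i$ is a \emph{full} $\dot{\sim}$-class, is unnecessary (and indeed cannot be concluded from the hypotheses: nothing prevents two parts $X_i,X_j$ in the same color class from being equivalent to one another); the statement only asserts pairwise equivalence within each $X_i$, which is what you proved. On the other hand, your final paragraph on the direct product is a genuine addition: the paper's own proof stops after the equivalence argument and does not spell out $\langle H_1,\ldots,H_k\rangle=H_1\times\cdots\times H_k$, whereas you correctly derive it from the disjointness of supports (or by citing Proposition~\ref{pro30102024}).
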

\begin{proof} For $1\le i \le k$, take $x,y\in X_i$. Choose an automorphism $h_i\in H_i$ that takes $x$ to $y$. Let $v\in V$ such that $v\in N^+(x)$. Then $v\not\in X_i$. Therefore, $h_i$ takes the ordered pair $(x,v)$ to $(h(x),h(v))=(y,v)$. Since $xv\in E(\dG)$ and $h$ preserves the edges, this yields that $yv\in E(\dG)$. Replacing $N^+(x)$ with $N^-(x)$ in the above argument shows that $vx\in E(\dG)$ implies $vy\in E(\dG)$.   
\end{proof}
\begin{remark} In Proposition \ref{pro05112924}, if we weaken  Hypothesis 
(ii) to requiring only the transitivity of $H_i$ on $X_i$, then its proof still works.
\end{remark}

Given any bipartite digraph, Proposition \ref{pro30102024} gives an idea of how to obtain from it some bipartite digraphs with large automorphism groups. 
Start with a bipartite digraph $\overrightarrow{\Gamma}=\overrightarrow{\Gamma}(S,F)$ with vertex-set $S$ and edge-set $F$. For a vertex $s\in S$, consider the directed bipartite subgraph $\overrightarrow{\Gamma}_s$ of $\overrightarrow{\Gamma}$ which is induced by $s$ together with all its out-neighbors and in-neighbors. Add a vertex $s'$ to $V$ together with all edges $s'u$ and $vs'$ where $u\in N^+(s)$ and $v\in N^-(s)$. Then, the resulting bipartite digraph contains equivalent vertices, namely $s$ and $s'$. This procedure is called \emph{blow-up} by analogy with algebraic geometry.  Blow-up can be repeated several times using the same vertex $s$, building up an equivalence class of vertices with any size. This procedure may be repeated for any other vertex so that $k$ equivalence classes $X_1,\ldots,X_k$ arise. For the bipartite digraphs $\dG$ constructed in this manner, Proposition \ref{pro30102024} shows that if $H_i$ is the symmetric group on $X_i$, then $\aut(\dG)$ contains a subgroup isomorphic to the direct product $H_1\times \cdots \times H_k$. 
\begin{example}
\label{ex010125}
{\em{Let $\dG$ be the 2-qBMG in 
Figure~\ref{figura100}(left). The blow-up at vertex $1$ applies to $\dG$ by adding vertex $6$. The arising digraph $\dG[1]$ is the 2-qBMG in Figure~\ref{figura100}(right). 
\begin{figure}[ht]
\centering
\scalebox{0.30}
{\includegraphics{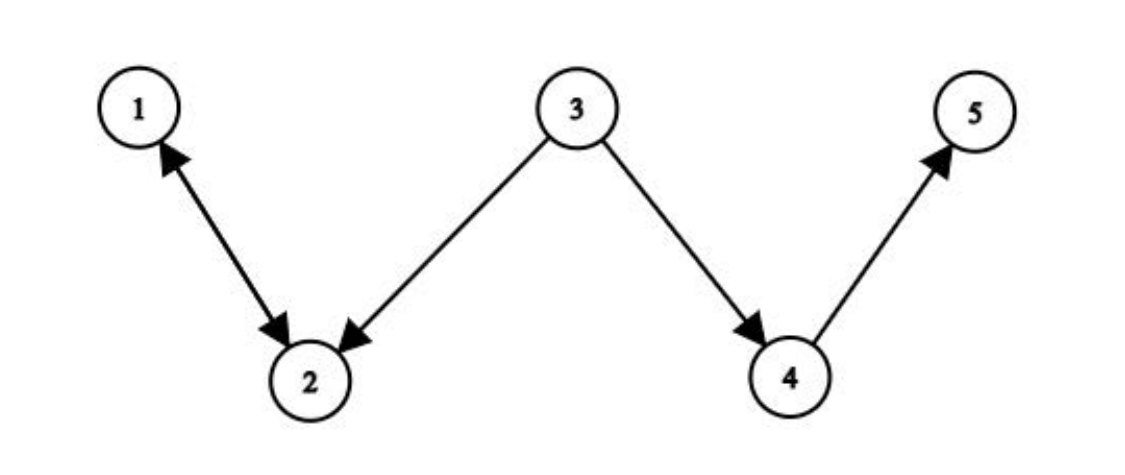}}
\scalebox{0.30}
{\includegraphics{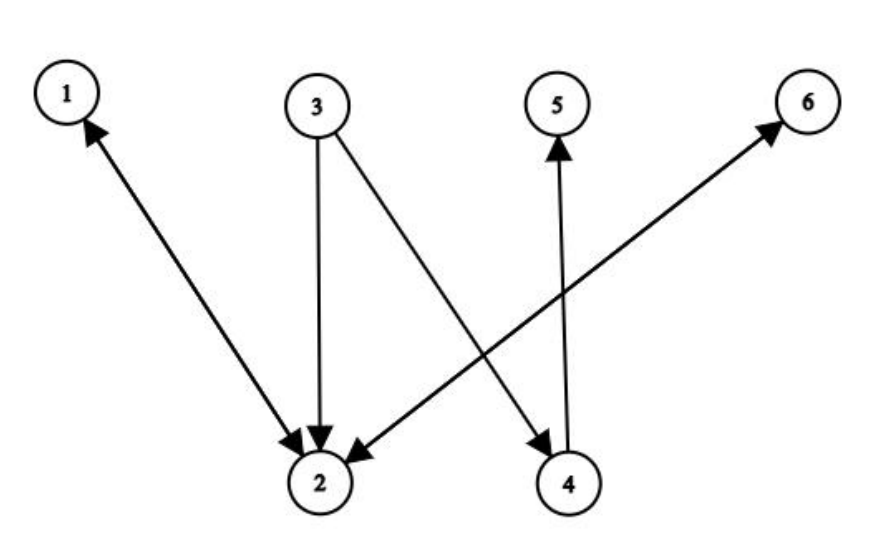}}
\caption{First blow-up. $\dG$ is the 2-qBMG  with color classes $\{1,3,5\},\,\{2,4\}$, and edge-set $E:=\{[1,2],[2,1],[3,2],[3,4],[4,5]\}$ (left). $\dG[1]$ is the 2-qBMG with color classes $\{1,3,5,6\}$, 
$\{2,4\}$, and edge-set $\{[1,2],[2,1],[3,2],[3,4],[4,5],[6,2],[2,6]\}$, obtained by adding vertex $7$ to $\dG[1]$. $\dG[1]$ is the blow-up at vertex $1$ applied to $\dG$ (right).}
\label{figura100}
\end{figure}
The blow-up at vertex $2$ applies to $\dG[1]$ by adding vertex $7$. The arising digraph $\dG[1][2]$ is the 2-qBMG in Figure~\ref{figura101}(left). 
\begin{figure}[ht]
\scalebox{0.30}
{\includegraphics{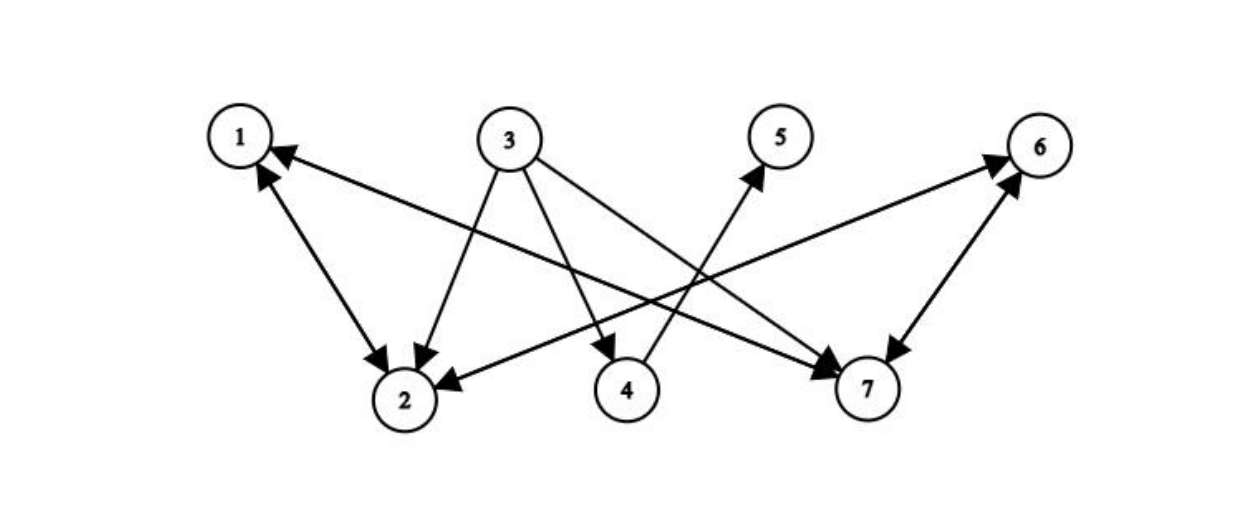}}
\scalebox{0.30}
{\includegraphics{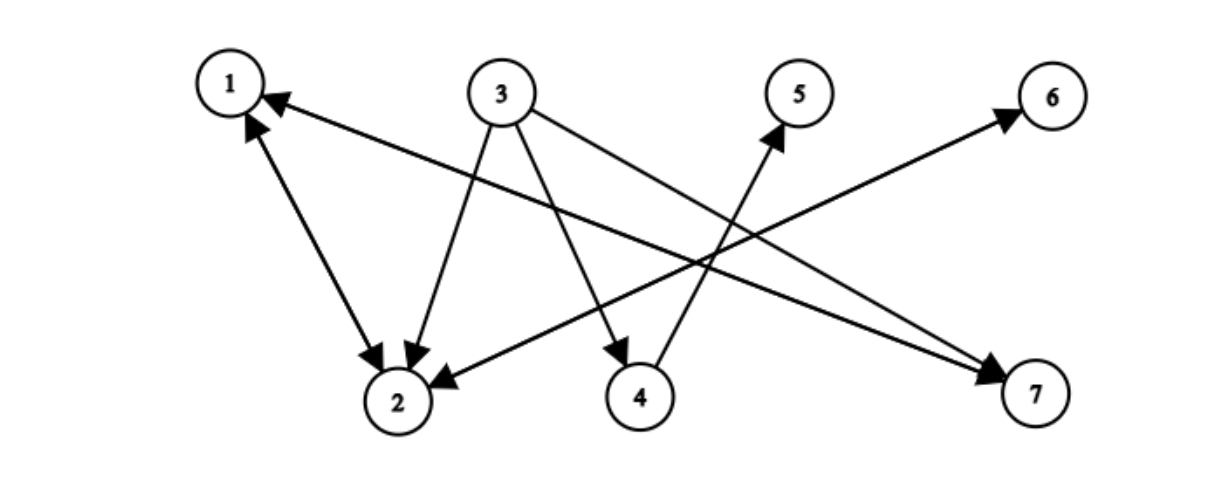}}
\caption{Second blow-up. $\dG[1][2]$ is the 2-qBMG with color classes $\{1,3,5,6\}$, $\{2,4,7\}$ and edge-set $\{[1,2],[2,1],[3,2],[3,4],[4,5],[6,2],[2,6]\}$, obtained by adding vertex $7$ to $\dG[1]$. $\dG[1][2]$ is the blow-up at vertex $2$ applied to $\dG[1]$ (left). $\dG[1,2]$ is the digraph, obtained by adding $6$ and $7$ simultaneously, with color classes $\{1,3,5,6\}$, $\{2,4,7\}$, and edge-set $E\cup \{[6,2],[2,6],[7,1],[1,7],[3,7]\}$ is not a 2-qBMG (right).}
\label{figura101}
\end{figure}
Permutations $(1,6)$ and $(2,7)$ are color-preserving automorphisms of $\dG[1][2]$. 
However, blowing up $\dG$ at vertices $1$ and $2$ simultaneously does not produce a 2-qBMG; see; see Figure~\ref{figura101}(right).    Indeed, if we add $6$ and $7$ to the vertex set of $\dG$ and extend the edge-set by adding $[6,2],[2,6],[7,1],[1,7],[3,7]$ so that $1\dot{\sim}6$ and $2\dot{\sim}7$.}}
\end{example}

Now if we start with any 2-qBMG, the resulting digraph $\dG$ is a 2-qBMG, as well. Therefore, the following result is obtained.
\begin{theorem}
\label{th30102024} Let $X_1,X_2,\ldots,X_k$ be pairwise distinct sets. Then there exists a 2-qBMG whose color invariant automorphism group $\aut_I(\dG)$ contains a normal subgroup, which is the direct product $H_1\times\cdots\times H_k$ where $H_i$ is isomorphic to the symmetric group on $X_i$.   
\end{theorem}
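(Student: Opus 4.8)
The plan is to realize the required group through the blow-up construction introduced above, starting from a sufficiently large thin $2$-qBMG. Take $\overrightarrow{H}$ to be the disjoint union of $\lceil k/2\rceil$ symmetric edges; then $\overrightarrow{H}$ is a thin $2$-qBMG with at least $k$ vertices and no isolated vertex (each symmetric edge is $\overrightarrow{K}_{1,1}$, hence a $2$-qBMG; no two of its vertices are equivalent; and a disjoint union of $2$-qBMGs is a $2$-qBMG because any violation of (N1), (N2) or (N3) involves only vertices of a single connected component). Choose $k$ vertices $y_1,\dots,y_k$ of $\overrightarrow{H}$ and, for $i=1,\dots,k$, blow up at $y_i$ exactly $|X_i|-1$ times, so that $y_i$ is replaced by a set $X_i$ of $|X_i|$ pairwise equivalent vertices; the remaining vertices are left unchanged and form singleton classes $X_{k+1},\dots,X_m$. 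Equivalently, $\dG$ is the digraph on $X_1\cup\cdots\cup X_m$ in which, for $a\in X_i$ and $b\in X_j$, one has $ab\in E(\dG)$ precisely when $y_iy_j\in E(\overrightarrow{H})$. By construction $\dG$ is bipartite, the vertices of each $X_i$ are pairwise equivalent, the $\dot\sim$-classes of $\dG$ are exactly $X_1,\dots,X_m$, and each $X_i$ sits inside one color class because $y_i$ is not isolated in $\overrightarrow{H}$.

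The step I expect to be the main obstacle is checking that $\dG$ is again a $2$-qBMG. This is \emph{not} a consequence of Theorem~\ref{pro02112024}: there $\overrightarrow{H}$ is the quotient $\dG/\Gamma$, and one would need the converse of that theorem, which fails in general (cf.\ Figure~\ref{figura11}). Instead I would work with the covering map $\varphi\colon\dG\to\overrightarrow{H}$ that sends each $a\in X_i$ to $y_i$. Since $\varphi$ preserves edges and, thanks to the twin structure, an edge of $\dG$ is determined by the pair of $\varphi$-values of its endpoints ($ab\in E(\dG)$ iff $\varphi(a)\varphi(b)\in E(\overrightarrow{H})$), any quadruple of $\dG$ that must witness (N1), (N2) or (N3) projects under $\varphi$ to a configuration in $\overrightarrow{H}$; one then applies the corresponding axiom in $\overrightarrow{H}$ and transports the conclusion back fibrewise — for (N3) using that $N^+_{\dG}(a)=\bigcup_{\,y_iy_p\in E(\overrightarrow{H})}X_p$ for $a\in X_i$, so an inclusion of out-neighborhoods in $\overrightarrow{H}$ lifts verbatim to $\dG$. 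Bipartiteness of $\overrightarrow{H}$ is exactly what kills the degenerate cases (a would-be short directed cycle arising when two projected vertices coincide, or a configuration forcing two equally colored vertices to be adjacent); this is the kind of collapse that, as Construction~\ref{ex010125} illustrates, destroys bi-transitivity when the blown-up vertices are not genuine twins. The remaining work is a finite, somewhat tedious case distinction over the positions the fibres can occupy in each configuration.

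It then remains to recognize the group. For $1\le i\le m$ let $H_i\le{\rm{Sym}}_{V(\dG)}$ act on $X_i$ as ${\rm{Sym}}_{X_i}$ and fix every vertex outside $X_i$. Because the vertices of $X_i$ are equivalent, Lemma~\ref{lem30102024} gives $H_i\le\aut(\dG)$, and because $X_i$ is monochromatic, $H_i\le\aut_I(\dG)$. By Proposition~\ref{pro30102024}, $\Gamma:=\langle H_1,\dots,H_m\rangle=H_1\times\cdots\times H_m$ is normal in $\aut(\dG)$, hence also normal in the subgroup $\aut_I(\dG)$ that contains it. Since $X_{k+1},\dots,X_m$ are singletons, $H_{k+1}=\cdots=H_m=\{1\}$, so in fact $\Gamma=H_1\times\cdots\times H_k$ with $H_i\cong{\rm{Sym}}_{X_i}$; this $\Gamma$ is the asserted normal subgroup of $\aut_I(\dG)$.
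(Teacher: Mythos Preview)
Your proof is correct and follows essentially the same route as the paper: start from a thin $2$-qBMG, perform the blow-up construction to obtain the prescribed equivalence classes $X_1,\dots,X_k$, and then invoke Lemma~\ref{lem30102024} and Proposition~\ref{pro30102024}. The paper merely asserts, without argument, that the blown-up digraph is again a $2$-qBMG, whereas you correctly identify this as the non-trivial step and sketch how bipartiteness of the base together with the fibrewise edge rule $ab\in E(\dG)\iff\varphi(a)\varphi(b)\in E(\overrightarrow{H})$ lets one lift (N1), (N2), (N3) from $\overrightarrow{H}$ to $\dG$.
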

\section{Automorphism groups of thin 2-qBMGs}
Proposition \ref{pro30102024} shows that the structure of the automorphism group of a digraph is determined by that of its quotient digraph. This motivates the study of automorphism groups of thin digraphs, particularly in the context of the present paper, the study of the automorphism groups of thin 2-qBMGs.

In this section,  $\dG=\dG(V,E)$ always denotes a 2-qBMG. Therefore, $\dG$ has properties (N1),(N2),(N3) and (N3*). In particular, $\dG$ is a bipartite graph with color classes $U$ and $W$ where $V=U\cup W$. 
A key property is stated in the following claim.
\begin{proposition}
\label{pro20102024}
If $x,y$ are from the same $\aut_I(\dG)$-orbit and have a common out-neighbor, then they are equivalent.
\end{proposition}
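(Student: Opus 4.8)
The plan is to establish the two halves of the equivalence relation $\dot{\sim}$ separately, disposing of the out-neighbourhoods first. Since $x$ and $y$ lie in a single $\aut_I(\dG)$-orbit, they have the same colour, say $x,y\in U$, and moreover $|N^+(x)|=|N^+(y)|$ and $|N^-(x)|=|N^-(y)|$. As $x$ and $y$ have a common out-neighbour, axiom (N3) forces $N^+(x)\subseteq N^+(y)$ or $N^+(y)\subseteq N^+(x)$; combined with the equality of cardinalities (the graph being finite) this gives $N^+(x)=N^+(y)$. So the whole problem reduces to proving $N^-(x)=N^-(y)$.

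For the in-neighbourhoods I would split into two cases according to whether the hypothesis of (N3*) is met. \emph{Case 1: there is no vertex $z$ with $xz,zy\in E$ or $yz,zx\in E$.} Then (N3*) applies verbatim to the pair $x,y$ — same colour, common out-neighbour, no such $z$ — and yields at once $N^-(x)=N^-(y)$. \emph{Case 2: such a $z$ exists}, say $xz,zy\in E$ (the alternative $yz,zx\in E$ is the mirror image obtained by exchanging $x$ and $y$, which is permissible because they play symmetric roles in the statement). Here $z\in W$, and for every $u\in N^-(x)$ the sequence $ux,\,xz,\,zy$ is a directed path of length three, so bi-transitivity (N2) gives $uy\in E$; hence $N^-(x)\subseteq N^-(y)$, and $|N^-(x)|=|N^-(y)|$ upgrades this inclusion to an equality. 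Either way $N^-(x)=N^-(y)$.

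Combining the two halves gives $N^+(x)=N^+(y)$ and $N^-(x)=N^-(y)$, i.e. $x\dot{\sim}y$. The step I expect to be the main obstacle is the case distinction: one has to recognise that the ``exceptional'' configuration excluded in the hypothesis of (N3*) is exactly a length-two directed path joining $x$ and $y$, and that in the presence of such a path the conclusion must instead be extracted from (N2). This is precisely the place where the orbit hypothesis is genuinely needed, because (N2) by itself only delivers the inclusion $N^-(x)\subseteq N^-(y)$, and the equality of in-degrees coming from $x^{\aut_I(\dG)}=y^{\aut_I(\dG)}$ is what closes the gap; the remaining arguments are routine applications of the defining axioms.
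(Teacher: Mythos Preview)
Your proof is correct and follows essentially the same route as the paper's: both arguments use (N3) together with the equality of out-degrees to settle $N^+(x)=N^+(y)$, and then split into cases according to whether a directed length-two path joins $x$ to $y$ (or $y$ to $x$), invoking (N2) plus the in-degree equality in the former case and (N3*) in the latter. The only difference is organisational: you dispose of the out-neighbourhoods once at the outset, whereas the paper threads that step through each of its three cases.
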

\begin{proof}  Take $x,y\in V$ be with the following property. There exist $h\in \aut(\dG)$ such that $y=h(x)$ and $w\in V$ such that $w\in N^+(x)\cap N^+(y)$. We have to show that $x\dot{\sim} y$. From the hypothesis, $x$ and $y$ are in the same color class, say $U$. Three cases are investigated separately.

Case (I). {\em{Some out-neighbor of $x$ is an in-neighbor of $y$.}} In this case, there exists $w\in W$ for which $w\in N^+(x)$ and $w\in N^-(y)$ so that $xw,wy\in E$. We show that $N^-(x)\subseteq N^-(y)$. Take $z\in W$ with $z\in N^-(x)$. Then $zx\in E$, and (N2) applies to $zxwy$ whence $zy\in E$. Therefore, $z\in N^-(y)$. Thus $N^-(x)\subseteq N^-(y)$.  Furthermore, $|N^-(x)|=N^-(y)|$ by $y=h(x)$ whence $N^-(x)=N^-(y)$ follows. To show $N^+(x)=N^(y)$ it is enough to observe that (N3) applies to the pair $\{x,y\}$. Since $|N^+(x)|=N^+(y)|$ by $y=h(x)$, neither $N^+(x)\subsetneq N^+(y)$ nor $N^+(y)\subsetneq N^+(x)$ are possible. Therefore, $N^+(x)=N^+(y)$. Thus $x\dot{\sim} y$, a contradiction. 

Case (II). {\em{Some out-neighbor of $y$ is an in-neighbor of $x$.}} The proof of Case (I) holds true prior to swapping $x$ and $y$.

Case (III). {\em{The hypotheses in (N3*) are satisfied.}} In this case, (N3*) implies that $N^-(x)=N^-(y)$ and that either $N^+(x)\subseteq N^+(y)$ or $N^+(y)\subseteq N^+(x)$. Since $|N^+(x)|=|N^+(y)|$, the latter case only occurs when $N^+(x)=N^+(y)$. Therefore $x\dot{\sim} y$.
\end{proof}
\subsection{Orbits of \texorpdfstring{$\aut(\dG)$} on vertices}
\begin{proposition}
\label{pro20102024A} Assume that $\dG$ is a thin 2-qBMG. If an automorphism $g$ of $\dG$ fixes a vertex $v\in V$, then $g$ fixes each vertex in the in-neighborhood of $v$.
\end{proposition}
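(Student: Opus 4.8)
The plan is to derive the statement from Proposition~\ref{pro20102024} together with thinness. First note that the claim is vacuous when $N^-(v)=\emptyset$, so fix an arbitrary $u\in N^-(v)$; then $v$ is not isolated. Since $g$ is an automorphism fixing $v$, the edge $uv$ is mapped to $g(u)g(v)=g(u)v$, so $g(u)\in N^-(v)$ as well. In particular $u$ and $g(u)$ lie in the same color class, namely the one opposite to that of $v$; they share the out-neighbor $v$, i.e. $v\in N^+(u)\cap N^+(g(u))$; and $|N^+(u)|=|N^+(g(u))|$, $|N^-(u)|=|N^-(g(u))|$ because $g\in\aut(\dG)$. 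This is exactly the input needed for Proposition~\ref{pro20102024}.

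The one point requiring care is that Proposition~\ref{pro20102024} is phrased for vertices in a common $\aut_I(\dG)$-orbit, so I must produce a genuinely color-preserving automorphism sending $u$ to $g(u)$. To do this I would pass to the connected component $C$ of $\dG$ containing $v$ (hence also $u$ and $g(u)$). Since $g$ preserves components and fixes the non-isolated vertex $v\in C$, its restriction to $C$ is an automorphism of $C$ that cannot be color-switching, hence is color-preserving. Letting $h$ be the permutation of $V$ that agrees with $g$ on $C$ and fixes $V\setminus C$ pointwise, the absence of edges between $C$ and its complement makes $h$ an automorphism of $\dG$, and by construction $h\in\aut_I(\dG)$ with $h(u)=g(u)$. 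Thus $u$ and $g(u)$ are in the same $\aut_I(\dG)$-orbit. (Alternatively one can bypass this bookkeeping entirely and re-run the proof of Proposition~\ref{pro20102024} with $x=u$, $y=g(u)$, since that proof only uses that the two vertices have the same color, share an out-neighbor, and have equal in- and out-degrees.)

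Now Proposition~\ref{pro20102024} gives $u\mathbin{\dot{\sim}}g(u)$, and since $\dG$ is thin, $\dot{\sim}$ identifies no two distinct vertices, so $u=g(u)$. As $u\in N^-(v)$ was arbitrary, $g$ fixes every vertex in the in-neighborhood of $v$.

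I do not expect a substantive obstacle here: the degree/color verifications are routine, and once Proposition~\ref{pro20102024} and thinness are invoked the conclusion is immediate. The only genuinely delicate step is the legitimacy of applying Proposition~\ref{pro20102024} — i.e. exhibiting a color-preserving automorphism realizing $u\mapsto g(u)$ — which is handled by the component-restriction argument above (or sidestepped by re-deriving the needed special case of Proposition~\ref{pro20102024} directly).
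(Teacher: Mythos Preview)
Your proof is correct and follows essentially the same route as the paper: both apply Proposition~\ref{pro20102024} to the pair $u,\,g(u)$ sharing the out-neighbor $v$, and then invoke thinness to conclude $u=g(u)$. You are in fact more careful than the paper about the color-preserving hypothesis of Proposition~\ref{pro20102024}; the paper simply takes $H=\langle g\rangle$ without comment, which is harmless since the proof of Proposition~\ref{pro20102024} only uses that some $h\in\aut(\dG)$ sends $x$ to $y$.
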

\begin{proof} Take $g\in\aut(\dG)$ and $v\in V$ such that $g(v)=v$. On the contrary, assume that $x\in N^-(v)$ exists, such as $g(x)\ne x$. Let $y=g(x)$. Then $y$ belongs to the orbit $x^H$ where $H$ is the subgroup generated by $g$. Furthermore, $v$ is an out-neighbor of $x$ and $y$. Therefore, Proposition~\ref{pro20102024} applies whence $x\dot{\sim} y$ follows, a contradiction.
\end{proof}
\begin{theorem}
\label{pro20102024B} Assume that $\dG$ is a thin 2-qBMG. For any two subsets $U_1\subseteq U$ and $W_1\subseteq W$ which are $\aut(\dG)$-orbits, let $\overrightarrow{G}_1=\dG_1(V_1,E_1)$ be the directed subgraph of $\dG$ induced on $V_1=U_1\cup W_1$. If there exist $x_1\in U_1, y_1\in W_1$ such that $x_1y_1\in E$ then one of the following cases occurs:
\begin{itemize}
\item[(i)]  $\dG_1$ is the union of pairwise disjoint stars where $U_1$ comprises sources while $W_1$ does sinks, and $|N^+(x_1)||U_1|=|W_1|$; in particular if $|U_1|=|W_1|$ then $E_1$ is the union of pairwise disjoint edges. 
\item[(ii)] $E_1$ is the union of pairwise disjoint symmetric edges.  
\end{itemize}
\end{theorem}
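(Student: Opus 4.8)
The plan is to isolate two rigidity facts forced by thinness and the orbit hypotheses, and then to split according to whether the induced digraph $\dG_1$ contains a symmetric edge. Since $U_1$ and $W_1$ are $\aut(\dG)$-orbits lying inside the colour classes $U$ and $W$, the transitivity of $\aut(\dG)$ on each of them is realized by (restrictions to $V_1$ of) automorphisms of $\dG_1$ that do not switch colours, so Propositions~\ref{pro20102024} and~\ref{pro20102024A} are available. From these I extract: \textbf{(a)} two distinct vertices of $U_1$, and likewise two distinct vertices of $W_1$, have no common out-neighbour (Proposition~\ref{pro20102024} plus thinness); hence the sets $N^+(x)\cap W_1$ for $x\in U_1$ are pairwise disjoint and all have the same cardinality $p:=|N^+(x_1)\cap W_1|\ge 1$, and since $x_1$ is an in-neighbour of $y_1$ every vertex of $W_1$ has an in-neighbour in $U_1$, so these sets partition $W_1$ and $p\,|U_1|=|W_1|$; \textbf{(b)} an automorphism of $\dG$ fixing a vertex fixes all of its in-neighbours (Proposition~\ref{pro20102024A}).

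\emph{Case 1: $\dG_1$ has no symmetric edge.} I aim to prove that every edge of $\dG_1$ runs from $U_1$ to $W_1$; together with (a) this gives (i), with $p=1$ when $|U_1|=|W_1|$. Suppose some edge of $\dG_1$ goes from $W_1$ to $U_1$. Using transitivity on $U_1$ I may take it to be $y'x_1\in E$ with $y'\in W_1$, and $y'\ne y_1$ because $\dG_1$ has no symmetric edge. Pick $g'\in\aut(\dG)$ with $g'(y')=y_1$ and set $x'':=g'(x_1)\in U_1$; then $y_1x''\in E$. Taking any $w\in N^+(x'')\cap W_1$ and applying bi-transitivity (N2) to $x_1y_1,\ y_1x'',\ x''w$ gives $x_1w\in E$, so $x_1$ and $x''$ share the out-neighbour $w$, whence $x''=x_1$ by (a). But then $g'$ fixes $x_1$, so by (b) it fixes the in-neighbour $y'$ of $x_1$, contradicting $g'(y')=y_1\ne y'$.

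\emph{Case 2: $\dG_1$ has a symmetric edge.} I aim to prove (ii). By transitivity every $x\in U_1$ has a symmetric partner $m(x)\in W_1$; it is unique, since two partners of $x$ would be distinct vertices of $W_1$ both having $x$ as an out-neighbour, contrary to (a). Dually every $y\in W_1$ has a unique partner $\mu(y)\in U_1$, and $\mu=m^{-1}$. Next, using (a), I show that $m(x)$ is the only in-neighbour of $x$ in $W_1$ (a second one would share the out-neighbour $x$ with $m(x)$), that $\mu(y)$ is the only out-neighbour of $y$ in $U_1$ (by the mirror argument, applying (N2) to $\mu(y)y,\ yx',\ x'm(x')$), and that both $m$ and $\mu$ are injective, so $|U_1|=|W_1|$ and $m$ is a bijection. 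Finally $m(x)$ is also the only out-neighbour of $x$ in $W_1$: if $xz\in E$ with $z\in W_1\setminus\{m(x)\}$, then (N2) applied to $m(x)x,\ xz,\ z\mu(z)$ gives $m(x)\mu(z)\in E$, so $\mu(z)$ is an out-neighbour of $m(x)$ in $U_1$, forcing $\mu(z)=\mu(m(x))=x$ and hence $z=m(x)$, a contradiction. Therefore $E_1=\{x\,m(x),\ m(x)\,x:x\in U_1\}$, a disjoint union of symmetric edges.

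The step I expect to be the crux is Case 1, i.e.\ excluding a ``backward'' edge $W_1\to U_1$ when $\dG_1$ has no symmetric edge: one has to use bi-transitivity to convert the backward edge into a length-two path and then into a shared out-neighbour, and then combine Propositions~\ref{pro20102024} and~\ref{pro20102024A} so that the resulting fixed point has an in-neighbour that is simultaneously forced to move and forced to stay. Care is needed to use the same automorphism $g'$ in both halves of that argument, and --- a minor but real point --- to justify at the outset that $U_1$ and $W_1$, being $\aut(\dG)$-orbits contained in colour classes, behave as $\aut_I(\dG)$-orbits so that Proposition~\ref{pro20102024} applies.
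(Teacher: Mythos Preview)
Your proof is correct and follows essentially the same route as the paper: both split on whether a backward edge (equivalently, a symmetric edge) is present, use Proposition~\ref{pro20102024} together with thinness to forbid common out-neighbours within an orbit, and apply (N2) to reduce a hypothetical backward edge to such a forbidden configuration. One small shortcut in your Case~1: once you deduce $x''=x_1$ you already have $y_1x_1\in E$, so $x_1y_1$ is symmetric, contradicting the case hypothesis directly---the appeal to Proposition~\ref{pro20102024A} is not needed (the paper's corresponding step instead invokes the Case~(I) argument from the proof of Proposition~\ref{pro20102024} to get $x_1\dot\sim x_3$).
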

\begin{proof} Take $x_1,y_1\in V$ such that $x_1y_1\in E$. Assume that there exist $x_2\in U_1$ and $y_2\in W_1$ such that $y_2x_2\in E$. Since $W_1$ is a $\aut_I(\dG)$-orbit, we find some $\pi\in \aut(\dG)$ such that $\pi(y_2)=y_1$. Let $x_3=\pi(x_2)$. Then $x_1y_1, y_1x_3\in E$. Also, $x_3\in U_1$ as $U_1$ is an $\aut_I(\dG)$-orbit. Two cases occur according as $x_1 \ne x_3$ or $x_1= x_3.$ 

In the former case, the arguments in Case (I) of the proof of Proposition \ref{pro20102024} can be used for $x=x_1, w=y_1, y=x_3$ to prove that $x_1{\dot{\sim}} x_3$. Since $\dG$ is thin, this is a contradiction. Thus,  as $U_1$ and $W_1$ are $\dG$-orbits, each vertex in $U_1$ is a source whereas each vertex in $W_1$ is a sink of $\dG_1$. We show the second claim in (i). From Proposition \ref{pro20102024}, $N^+(x_1)\cap N^+(u_1)=\emptyset$ for any two distinct $x_1,u_1\in U_1$. Since $U_1$ is a $\aut_I(\dG)$-orbit, $|N^+(x_1)|=|N^+(u_1)|$ holds for any $x_1,u_1\in U_1$. Therefore, $|N^+(x_1)||U_1|=|W_1|$ for $x_1\in U_1$. If $|U_1|=|W_1|$ then $|N^+(u_1)|=|N^+(x_1)|=1$ for every $u_1\in U_1$. Since $|N^-(w)|=1$ also holds, $\dG_1$ is regularly $N$-trivial.  

In the latter case, $x_1=x_3$, and hence $x_1y_1$ is a symmetric edge in $E_1$. Since $U_1$ is an $\aut_I(\dG)$-orbit, each vertex in $U_1$ is an endpoint of a symmetric edge. Since the same is valid for $W_1$, the first claim in (ii) holds. From Proposition~\ref{pro20102024}, there exists no $x_2$ in $U_1$ other than $x_1$ such that $x_2y_1\in E$ holds. Similarly, no $y_2$ in $W_1$ other than $y_1$ satisfies $x_1y_2\in E_1$. Therefore, the second claim in (ii) also holds.       
\end{proof}
A corollary of Proposition \ref{pro20102024A} is stated in the following proposition. 
\begin{proposition}
\label{pro20102024C} Assume that $\dG=\dG(U\cup W,E)$ is a thin 2-qBMG. If $\dG$ is balanced and both color classes $U$ and $W$ of $\dG$ are $\aut_I(\dG)$-orbits, then $E$ is either the union of pairwise disjoint edges, or the union of pairwise disjoint symmetric edges, and  $\aut_I(\dG)\cong {\rm{Sym}}_{\it{n}}$ with $n=|U|=|W|$.   
\end{proposition}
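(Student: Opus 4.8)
The plan is to read off the structure of $\dG$ from Theorem~\ref{pro20102024B} applied to the orbits $U_1=U$, $W_1=W$, and then to compute $\aut_I(\dG)$ by hand. First I would note that $U$ and $W$, being $\aut_I(\dG)$-orbits, are non-empty; if $E=\emptyset$ then every vertex of $\dG$ would be isolated, and since the isolated vertices of a digraph form a single class under $\dot{\sim}$, thinness of $\dG$ would force $|V|\le 1$, contradicting $U\ne\emptyset\ne W$. Hence $E\neq\emptyset$, and — after interchanging the names of the two color classes if necessary — there is an edge $x_1y_1\in E$ with $x_1\in U$ and $y_1\in W$. (If some vertex of $U$ were isolated, then all of $U$ would be, since $U$ is an orbit, again contradicting $E\neq\emptyset$; so $\dG$ has no isolated vertex.)

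Next I would apply Theorem~\ref{pro20102024B} with $U_1=U$, $W_1=W$, $V_1=V$ and $\dG_1=\dG$; its hypotheses hold because $U$ and $W$ are $\aut_I(\dG)$-orbits and $x_1y_1\in E$. In case~(i) of that theorem, $\dG$ is the union of pairwise disjoint stars with $U$ consisting of sources, $W$ of sinks, and $|N^+(x_1)|\,|U|=|W|$; since $\dG$ is balanced with $n:=|U|=|W|$, this gives $|N^+(x_1)|=1$, so $E$ is the union of pairwise disjoint directed edges. In case~(ii), $E$ is already the union of pairwise disjoint symmetric edges. Either way $\dG$ is a disjoint union of $n$ mutually isomorphic connected components $C_1,\dots,C_n$, where $C_i$ consists of a vertex $x_i\in U$ and a vertex $y_i\in W$ joined by a single edge, directed in case~(i) and symmetric in case~(ii).

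It then remains to identify $\aut_I(\dG)$. Any color-preserving automorphism $\varphi$ of $\dG$ permutes the connected components and hence induces a permutation $\sigma\in{\rm{Sym}}_n$ with $\varphi(\{x_i,y_i\})=\{x_{\sigma(i)},y_{\sigma(i)}\}$; since $\varphi$ keeps colors, $\varphi(x_i)=x_{\sigma(i)}$ and $\varphi(y_i)=y_{\sigma(i)}$. The assignment $\varphi\mapsto\sigma$ is a homomorphism $\aut_I(\dG)\to{\rm{Sym}}_n$, and it is injective because $y_i$ is the only neighbour of $x_i$ in $\dG$, so an automorphism fixing every $x_i$ fixes every $y_i$ and is the identity. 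Conversely, for each $\sigma\in{\rm{Sym}}_n$ the map $x_i\mapsto x_{\sigma(i)}$, $y_i\mapsto y_{\sigma(i)}$ preserves edges and colors, hence lies in $\aut_I(\dG)$; so the homomorphism is surjective and $\aut_I(\dG)\cong{\rm{Sym}}_n$.

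I expect no real obstacle here: given Theorem~\ref{pro20102024B} the result is nearly immediate, and the only point that deserves care is the last identification in case~(ii). There the full group $\aut(\dG)$ is the wreath product ${\rm{Sym}}_2\wr{\rm{Sym}}_n$, whose extra elements flip the two endpoints of a symmetric edge; since those endpoints lie in different color classes, such flips are color-switching and do not belong to $\aut_I(\dG)$, which is exactly why the color-invariant group is ${\rm{Sym}}_n$ and not something larger. (Alternatively one could bypass Theorem~\ref{pro20102024B}: by Proposition~\ref{pro20102024} the out-neighbourhoods of distinct vertices of $U$ are disjoint, and likewise for $W$, which together with $|U|=|W|=n$ and bi-transitivity~(N2) forces every in- and out-degree to be at most $1$ and excludes directed paths of length two, leaving precisely the two stated configurations.)
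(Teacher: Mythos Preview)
Your argument is correct. The paper itself gives no proof: it merely announces the proposition as ``a corollary of Proposition~\ref{pro20102024A}''. The structural half of the statement is in fact exactly the balanced special case already singled out inside Theorem~\ref{pro20102024B}(i), so your route through that theorem is the natural one (the paper's pointer to Proposition~\ref{pro20102024A} is presumably meant for the $\aut_I(\dG)$ claim, or is simply a slip for~\ref{pro20102024B}). Your explicit identification $\aut_I(\dG)\cong{\rm Sym}_n$ via the permutation action on the $n$ two-vertex components, together with the observation that in case~(ii) the extra wreath-product factor consists of color-switching maps, supplies the detail the paper omits. The preliminary remark ruling out $E=\emptyset$ (and isolated vertices) via thinness is also a point the paper leaves implicit.
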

\subsubsection{Examples for Theorem ~\ref{pro20102024B}}
We construct three thin 2-qBMGs using the ideas explained in Theorem~\ref{pro20102024B}.
\begin{example} 
\label{ex2310204}
Take four pairwise disjoint sets $U_1,U_2,W_1,W_2$ of the same size $m$ consisting of positive integers, namely 
$$
\begin{cases} 
U_1=\{1,2,\ldots,m\},\,\, U_2=\{m+1,m+2\ldots,2m\}\\ 
W_1=\{2m+1,2m+2\ldots,3m\},\,\, W_2=\{3m+1,3m+2\ldots,4m\}.
\end{cases}
$$
Moreover, choose three invertible functions, say $\alpha,\beta, \gamma$, where $$\alpha: U_1\mapsto W_1,\,\beta:=W_1 \mapsto U_2,\, \gamma:=U_2\mapsto W_2.$$
Then $\delta=\gamma\circ\beta\circ\alpha$ is an invertible function $\delta:U_1\mapsto W_2$. Consider the bipartite digraph $\dG$ with color classes $U=U_1\cup U_2$ and $W=W_1\cup W_2$ where the edge set $E$ is defined as follows: for $u_1\in U_1$ and $w_1\in W_1$, $E$ contains $u_1w_1$ if and only if $w_1=\alpha(u_1)$; for $w_1\in W_1,u_2\in U_2$, $E$ contains $w_1u_2$ if and only if $u_2=\beta(w_1)$; for $u_2\in U_1,w_2\in W_2$, $E$ contains $u_2w_2$ if and only if $w_2=\gamma(u_2)$; and finally for $u_1\in U_1, w_2\in W_2$ if and only if $w_2=\delta(u_1)$. No more edge is added to $E$. By example, $\dG(V,E)$ with $V=U\cup W$ is thin and has both properties (N1) and (N2). Furthermore, property (N3) holds as $|N^+(v)|\leq 1$ for all $v\in V$. Hence, $\dG$ is a thin proper 2-qBMG. 
Moreover, the sources of $\dG$ are the vertices in $U_1$, whereas the sinks of $\dG$ are those in $V_2$. Figure~\ref{figura7} illustrates the case for $m=4$,
where  
$$\alpha=\left[
  \begin{array}{cccc}
    1 & 2 & 3 & 4\\
    10 & 9 & 12 & 11\\
     \end{array},
\right] \quad
\beta=\left[
  \begin{array}{cccc}
    9 & 10 & 11 & 12\\
    8 & 6  & 7 & 5\\
     \end{array},
\right]
\quad
\gamma=\left[
  \begin{array}{cccc}
    5 & 6 & 7 & 8\\
    14 & 13 & 15 & 16 \\
     \end{array}
\right]
.
$$
\begin{figure}[ht!]
\centering
\scalebox{0.55}
{\includegraphics{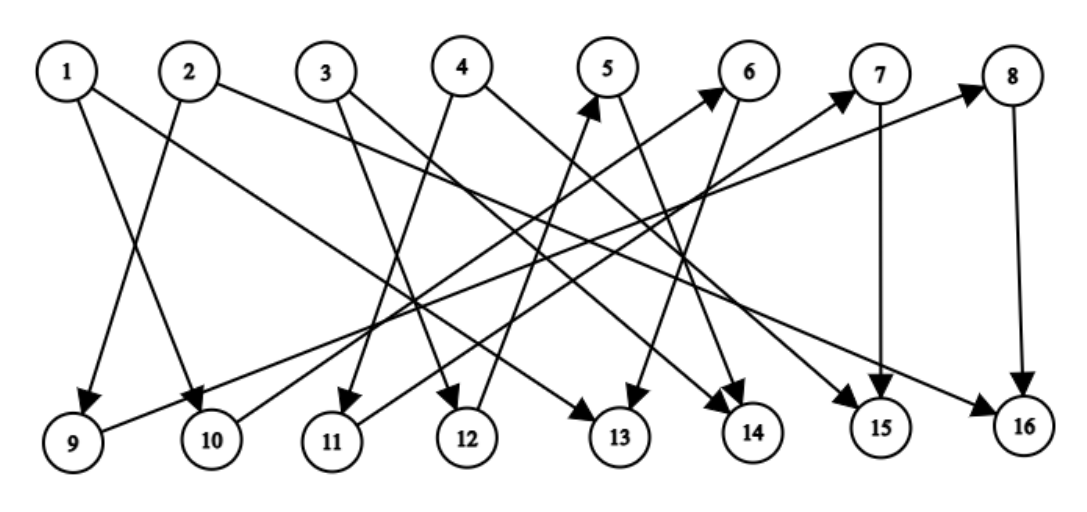}}
\caption{Case $m=4$. \emph{$\dG$ is the thin proper 2-qBMG with color classes $U_1\cup U_2$ and $W_1\cup W_2$ where $U_1=\{1,\ldots, 4\}, U_2=\{5,\ldots, 8\}, W_1=\{9,\ldots, 12\}, W_2=\{13,\ldots, 16\}$.  $\Gamma=\aut_I(\dG)$ with $\Gamma$ the subgroup of $\pi$ ranging over all permutations on $U_1$ in (\ref{eq23102024}).}}
\label{figura7}
\end{figure}

Let $\pi$ be any permutation on $U_1$. Then $\alpha\circ \pi\circ \alpha^{-1}$ is a permutation on $W_1$, $\beta\circ \alpha\circ \pi (\beta\circ \alpha)^{-1}$ is a permutation on $U_2$, and
$\delta\circ\pi\circ\delta^{-1}$ is a permutation on $W_2$. A permutation $\varphi$ on $V$ is defined as follows:
\begin{equation}
\label{eq23102024}
\varphi(x):=
\begin{cases} 
{\mbox{$\pi(x)$ for $x\in U_1$}},\\
{\mbox{$(\alpha\circ \pi\circ \alpha^{-1})(x)$ for $x\in W_1$}},\\
{\mbox{$((\beta\circ\alpha)\circ \pi\circ (\beta\circ\alpha)^{-1})(x)$ for $x\in U_2$}},\\
{\mbox{$(\delta\circ \pi\circ \delta^{-1})(x)$ for $x\in W_2$}}.\\
\end{cases}
\end{equation}
We prove that $\varphi\in \aut_I(\dG)$. Take two vertices $u,w$ such that $uw\in E$. We have to show that then $\varphi(u)\varphi(w)\in E$. 
Assume that $u_1\in U_1,w_1\in W_1$ with $u_1w_1\in E$. Then $w_1=\alpha(u_1)$ whence $$\varphi(w_1)=\varphi(\alpha(u_1))=(\varphi\circ\alpha)(u_1)=(\alpha\circ\pi\circ\alpha^{-1})(\alpha)(u_1)=\alpha(\pi(u_1)).$$
Therefore, $\varphi(u_1)\varphi(w_1)=\pi(u_1)\alpha(\pi(u_1))$ and this shows the claim for $u_1\in U_1$ and $w_1\in W_1$.

Assume that $w_1\in W_1,u_2\in U_2$ with $w_1u_2\in E$. Then there exists $u_1\in U_1$ such that $w_1=\alpha(u_1)$. Therefore, $\varphi(w_1)=\varphi(\alpha(u_1))=(\alpha\circ\pi\circ\alpha^{-1})(\alpha(u_1))=\alpha(\pi(u_1)).$
Furthermore, $u_2=\beta(w_1)=(\beta\circ \alpha)(u_1)$. Therefore, 
$$\varphi(u_2)=(\beta\circ\alpha\circ\pi\circ\alpha^{-1}\circ\beta^{-1})(\beta(w_1))=(\beta\circ\alpha\circ\pi)(\alpha^{-1}(w_1))=
(\beta\circ\alpha\circ\pi)(u_1).$$
From this $\varphi(u_2)=(\beta\circ\alpha)(\pi(u_1))$ whence the claim follows for $w_1\in W_1,u_2\in U_2$.

Assume that $u_2\in U_2,w_2\in W_2$ with $u_2w_2\in E$. Then there exists $u_1\in U_1$ such that $u_2=(\beta\circ\alpha)(u_1)$. Thus 
$$\varphi(u_2)=\varphi((\beta\circ\alpha)(u_1))=(\beta\circ\alpha\circ\pi\circ(\beta\circ\alpha)^{-1})(\beta\circ\alpha)(u_1)=(\beta\circ\alpha)(\pi(u_1)).$$
Also, $w_2=\gamma(u_2)=(\gamma\circ\beta\circ\alpha)(u_1)$ whence
$\varphi(w_2)=(\delta\circ\pi\circ\delta^{-1})(\gamma\circ\beta\circ\alpha)(u_1).$
Since $\delta=\gamma\circ\beta\circ\alpha$ this reads
$$\varphi(w_2)=(\gamma\circ\beta\circ\alpha\circ\pi\circ\alpha^{-1}\circ\beta^{-1}\circ\gamma^{-1}\circ\gamma\circ\beta\circ\alpha)(u_1)=(\gamma\circ\beta\circ\alpha)(\pi(u_1)).$$
Therefore, $\varphi(w_2)=\delta(\pi(u_1))$. This shows that the claim follows for $u_2\in U_2$ and $w_2\in W_2$.

Finally, assume that $u_1\in U_1,w_2\in W_2$ with $u_1w_2\in E$. Then $w_2=\delta(u_1)$ and hence
$\varphi(w_2)=\varphi(\delta(u_1))=(\delta\circ\pi\circ\delta^{-1})(\delta(u_1))=\delta(\pi(u_1))$. From this the claim follows for $u_1\in U_1$ and $w_2\in W_2$.

The automorphisms $\varphi$ when $\pi$ ranges over all permutations on $U_1$ form a subgroup $\Gamma$ of $\aut_I(\dG)$ which is isomorphic to the symmetric group  ${\rm{Sym}}_m$ on $m$ letters where $m=|U_1|$. Moreover, the sets $U_1,U_2,W_1,W_2$ are $\Gamma$-orbits, and $\Gamma$ acts on each of them as ${\rm{Sym}}_m$ on $m$ letters. 
It may happen that $\Gamma=\aut_I(\dG)$, as for the 2-qBMG in Figure~\ref{figura7}.  
\end{example}
\begin{example} 
\label{ex2510204}
A slightly modified version of Example~\ref{ex2310204} provides a thin 2-qBMG which is (N2)-trivial. Again, take four pairwise disjoint sets $U_1,U_2,W_1,W_2$ of the same size $m$ consisting of positive integers; but this time label them as follows:  
\begin{equation}\label{eq23102024A}
\begin{cases}
U_1=\{1,2,\ldots,m\},\,\, W_1=\{m+1,m+2\ldots,2m\},\\ 
W_2=\{2m+1,2m+2\ldots,3m\},\,\, U_2=\{3m+1,3m+2\ldots,4m\}.
\end{cases}
\end{equation}
Choose three invertible functions, say $\alpha,\beta, \gamma$, where $$\alpha: U_1\mapsto W_1,\,\beta:=W_1 \mapsto U_2,\, \gamma:=W_2\mapsto U_2.$$
Then $\delta=\gamma^{-1}\circ\beta\circ\alpha$ is an invertible function from $U_1$ onto $W_2$.  Consider the bipartite digraph $\dG$ with color classes $U=U_1\cup U_2$ and $W=W_1\cup W_2$ where the edge set $E$ is defined as follows: for $u_1\in U_1$ and $w_1\in W_1$, $E$ contains $u_1w_1$ if and only if $w_1=\alpha(u_1)$; for $w_1\in W_1,u_2\in U_2$, $E$ contains $w_1u_2$ if and only if $u_2=\beta(w_1)$; for $u_2\in U_1,w_2\in W_2$, $E$ contains $w_2u_2$ if and only if $u_2=\gamma(w_2)$; and finally for $u_1\in U_1, w_2\in W_2$ if and only if $w_2=\delta(u_1)$. No more edge is contained in $E$. Then $\dG(V,E)$ with $V=U\cup W$ is a 2-qBMG which is (N2)-trivial but is neither (N1)- nor (N3)-trivial, as $W_1$ and $W_2$ contain only sinks. Moreover, the sources of $\dG$ are the vertices in $U_1$ whereas the sinks of $\dG$ are those in $U_2$.
Figure \ref{figura6} illustrates the case where $m=4$ and  
$$\alpha=\left[
  \begin{array}{cccc}
    1 & 2 & 3 & 4\\
    6 & 5 & 8 & 7\\
     \end{array}
\right], \quad
\beta=\left[
  \begin{array}{cccc}
    9 & 10 & 11 & 12\\
    13 & 14  & 15 & 16\\
     \end{array}
\right],
\quad
\gamma=\left[
  \begin{array}{cccc}
    9 & 10 & 11 & 12\\
    13 & 14 & 15 & 16 \\
     \end{array}
\right]
.
$$
\begin{figure}[ht]
\centering
\scalebox{0.55}
{\includegraphics{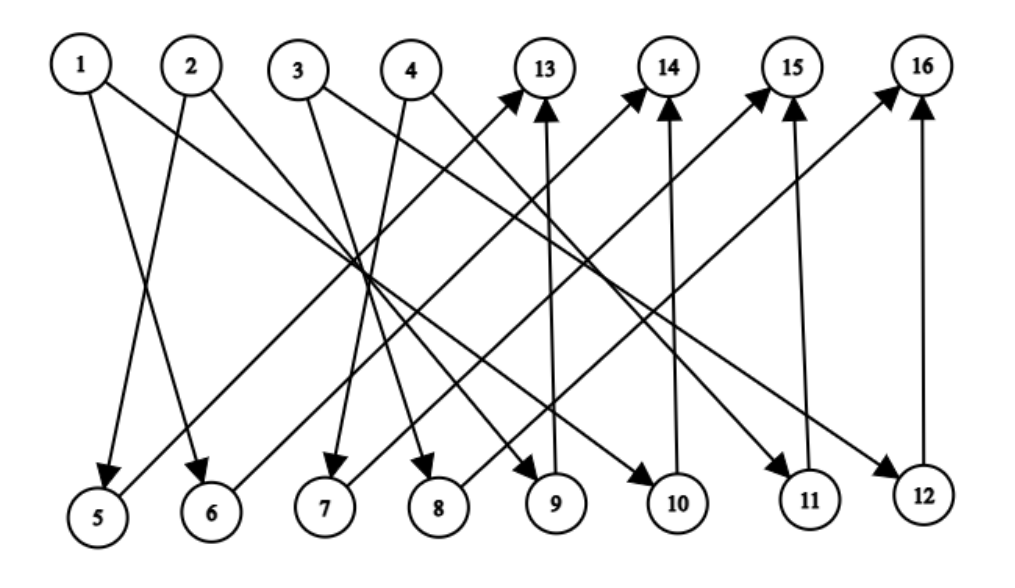}}
\caption{\emph{Case $m=4$. $\dG$ is the thin (N2)-trivial 2-qBMG with color classes $U_1\cup U_2$ and $W_1\cup W_2$ where $U_1=\{1,\ldots, 4\}, U_2=\{5,\ldots, 8\}, W_1=\{9,\ldots, 12\}, W_2=\{13,\ldots, 16\}$. The group $\Gamma$ of $\pi$ ranging over all permutations on $U_1$ in (\ref{eq23102024}) is a proper subgroup of $\aut_I(\dG)$.}}
\label{figura6}
\end{figure}

Let $\pi$ be any permutation on $U_1$. As in Example \ref{ex2310204}, every permutation $\pi$ on $U_1$ gives rise to an automorphism $\varphi\in \aut_I(\dG)$, defined as in (\ref{eq23102024}) up to replacing $\gamma$ with $\gamma^{-1}$. The final claims in Example \ref{ex2310204} about $\Gamma$ hold true for Example~\ref{ex2510204}. However, $\Gamma$ may be a proper subgroup of $\aut_I(\dG)$. This occurs for the 2-qBMG in Figure \ref{figura6} whose color-preserving automorphism group has order $384$ much larger than $24$, the order of ${\rm{Sym}}_4$, and it has $3$-orbits namely $U_1,U_2$, and $W_1\cup W_2$. 
\end{example}
\begin{example} \label{ex:general_case}
We show that the 2-qBMG obtained by Example \ref{ex2310204} may be viewed as a special case of a much more general example. We start with two families  $U$ and $V$ consisting of the same number of pairwise distinct subsets of the same size, say $U=\{U_1,\ldots, U_s\}$ and $W=\{W_1,\ldots, W_s\}$ where $|U_1|=\cdots=|U_s|=|W_1|=\cdots =|W_k|=m$. 
Take an invertible function $f_{i,i}$, for every $1\le i \le s$, from $U_i$ onto $W_i$ together with an invertible function $g_{j,j+1}$ for every $1\le j \le s-1$, from $W_j$ onto $W_{j+1}$.

We construct a bipartite digraph $\dG$ with color classes $U=U_1\cup\cdots \cup U_s$ and $W=W_1\cup \cdots \cup W_s$. The edge set $E$ of $\dG$ is constructed in several steps.

The first one is to introduce the edges $u_iw_i$ for $i=1,\ldots ,s$: If $u_i\in U_i$ and $w_i\in W_i$, then $E$ contains $u_iw_i$ 
whenever $w_i=f_{i,i}(u_i)$.

The second step adds the edges $w_{j}u_{j+1}$ for $j=1,\ldots,s-1$: If $w_j\in W_j$ and $u_{j+1}\in U_{j+1}$, then  
$w_{j}u_{j+1}\in E$ whenever  $u_{j+1}=g_{j+1,j}(w_j)$.

The third step adds the edges $u_iw_{i+1}$ for $i=1,\ldots,s-1$: If $u_i\in U_i$ and  $w_{i+1}\in W_{i+1},$ then $E$ contains $u_iw_{i+1}$ whenever $w_{i+1}=f_{i,i+1}(u_i)$ where $f_{i,i+1}$ is defined by $f_{i,i+1}=f_{i+1,i+1}\circ g_{i,i+1}\circ f_{i,i}$. 

The forth step adds the edges $u_iw_{i+2}$ for $i=1,\ldots,s-2$: If $u_i\in U_i$ and $w_{i+2}\in W_{i+2}$, then $E$ contains $u_iw_{i+2}$ whenever $w_{i+2}=f_{i,i+2}(u_i)$ where $f_{i,i+2}$ is defined by $f_{i,i+2}=f_{i+1,i+2}\circ g_{i,i+1}\circ f_{i,i}$.

The fifth step adds the edges $w_ju_{j+2}$ for $j=1,\ldots,s-2$: If $w_j\in W_j$ and $u_{j+2}\in U_{j+2}$, then $E$ contains $w_ju_{j+2}$ whenever $u_{j+2}=g_{j,j+2}(w_j)$ where $g_{j,j+2}$ is defined by $g_{j,j+2}=g_{j+1,j+2}\circ f_{j+1,j+1}\circ g_{j,j+1}$.

More generally, for $1\le i<k \le s$, let
$$f_{i,k}=f_{k,k}\circ g_{k-1,k}\circ f_{k-1,k-1}\circ \cdots \circ f_{i+1,i+1}\circ g_{i,i+1}\circ f_{i,i},$$
and, for $1\le j<d \le s$, let
$$g_{j,d}=g_{d-1,d}\circ f_{d-1,d-1}\circ g_{d-2,d-1}\circ \cdots \circ g_{j+1,j+2}\circ f_{j+1,j+1}\circ g_{j,j+1}$$
Now, we are able to define all edges of $E$: for $1\le i\le j \le s$,
if $u_i\in U_i,w_j\in W_j$ then $E$ contains $u_iw_j$ whenever $w_j=f_{i,j}(u_i)$; for $1\le j <i \le s$, if $w_j\in W_j,u_i\in U_i$ then $E$ contains $w_ju_i$ whenever $u_i=g_{i,j}(w_j)$. 

Let $\dG=\dG(U\cup W,E)$ be the above constructed bipartite graph. 
It may be noticed that the special case in Example~\ref{ex2310204} occurs for $s=2$ and $m=4$ with $U_1,U_2,W_1,W_2$, and
\begin{equation}
\label{eq23102024C}
\alpha=f_{1,1},\,\beta=g_{1,2},\,\gamma=f_{2,2},\,\delta=f_{1,2}. 
\end{equation}
Another example illustrating the above procedure is the 2-qBMG in Figure \ref{figura8} for $s,m=3$ where 
$$f_{1,1}=\left[
  \begin{array}{ccc}
    1 & 2 & 3\\
    11 & 12 & 10 \\
     \end{array}
\right], \quad
f_{2,2}=\left[
  \begin{array}{ccc}
    4 & 5 & 6\\
    13 & 15 & 14 \\
     \end{array}
\right],
\quad
f_{3,3}=\left[
  \begin{array}{ccc}
    7 & 8 & 9\\
    16 & 18 & 17 \\
     \end{array}
\right]
;
$$
$$g_{1,2}=\left[
  \begin{array}{ccc}
    10 & 11 & 12\\
    4 & 6 & 5 \\
     \end{array}
\right], \quad
g_{2,3}=\left[
  \begin{array}{ccc}
    13 & 14 & 15\\
     8 & 9  & 7 \\
     \end{array}
\right],
\quad
g_{1,3}=\left[
  \begin{array}{ccc}
    10 & 11 & 12\\
    8 & 9 & 7 \\
     \end{array}
\right]
;
$$
$$f_{1,2}=\left[
  \begin{array}{ccc}
    1 & 2 & 3\\
    14 & 15 & 13 \\
     \end{array}
\right], \quad
f_{2,3}=\left[
  \begin{array}{ccc}
    4 & 5 & 6\\
    18 & 16  & 17 \\
     \end{array}
\right],
\quad
f_{1,3}=\left[
  \begin{array}{ccc}
    1 & 2 & 3\\
    17 & 16 & 18; \\
     \end{array}
\right]
.
$$
Moreover, 
\begin{equation}
\label{eq24102024}
{\mbox{$ f_{i,k}=f_{d,k}\circ g_{j,d}\circ f_{i,j}$  for any $1\le i\le j< d \le k \le s$.}} 
\end{equation}
and
\begin{equation}
\label{eq24102024A}
{\mbox{$ g_{j,d}=g_{t,d}\circ f_{\ell,t}\circ g_{j,\ell}$  for any $1\le j <\ell \le t < d \le s$.}} 
\end{equation}
By example, $\dG(V,E)$ is a thin bipartite graph with color classes $U$ and $V$. Moreover, (\ref{eq24102024}) and (\ref{eq24102024A}) show that $\dG(U\cup W,E)$ has properties (N1) and (N2). Also, if $u_i\in U_i, u_k\in U_k$ with $i<k$ and $N^+(u_i)\cap N^+(u_k)\ne \emptyset$ then $N^+(u_k)\subset N^+(u_i)$. Similarly, if $w_j\in W_j, w_k\in U_k$ with $j<k$ and $N^+(w_j)\cap N^+(w_k)\ne \emptyset$ then $N^+(w_k)\subset N^+(w_j)$, as $|N^+(v)|\leq 1$ for all $v\in V$. Hence also (N3) holds for $\dG$. Therefore, $\dG(U\cup W,E)$ is a proper 2-qBMG.
\begin{figure}[ht]
\centering
\scalebox{0.55}
{\includegraphics{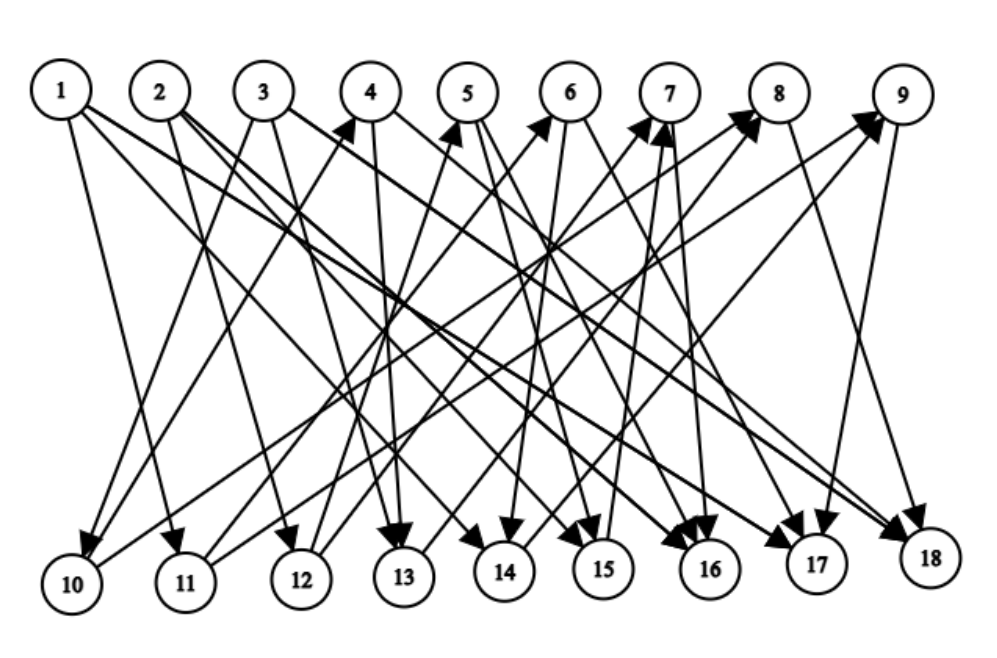}}
\caption{\emph{Case $s=3=m$. $\dG$ is a proper 2-qBMG with color classes $U_1\cup U_2 \cup U_3$ and $W_1\cup W_2 \cup W_3$ where $U_1=\{1,2,3\}, U_2=\{4,5,6\}, U_2=\{7,8,9\}, W_1=\{10,11,12\}, W_2=\{13,14,15\}, W_3=\{16,17,18\}$. The group $\Gamma$ of $\pi$ ranging over all permutations on $U_1$ in (\ref{eq23102024}) is a proper subgroup of $\aut_I(\dG)$.}}
\label{figura8}
\end{figure}

Every permutation $\pi$ on $U_1$ gives rise to an automorphism $\varphi$ of $\dG$. The arguments used in Example~\ref{ex2310204} can be adapted to show that 
$\varphi\in \aut_I(\dG)$ where
$$\varphi(x):=
\begin{cases} 
{\mbox{$\pi(x)$ for $x\in U_1$,}}\\
{\mbox{$(f_{1,j}\circ \pi\circ f_{1,j}^{-1})(x)$ for $x\in W_j$ and $1\le j\le s$}},\\ 
{\mbox{($g_{1,i}\circ f_{1,1}\circ \pi\circ (g_{1,i}\circ f_{1,1})^{-1})(x)$ for $x\in U_i$ and $1 \le j \le s$}}.
\end{cases}
$$ 

As in Example~\ref{ex2310204}, the automorphisms $\varphi$ with $\pi$ ranging over all permutations on $U_1$ form a subgroup $\Gamma\cong {\rm{Sym}}_m$ of $\aut_I(\dG)$, such that the sets $U_i$ and $W_j$ are $\Gamma$-orbits, and that $\Gamma$ acts on each them as ${\rm{Sym}}_m$ on $m$ letters.
\end{example}
Therefore, Example~\ref{ex:general_case} gives the following result. 
\begin{theorem}
\label{the2410204} For any two integers $m,s\ge 2$ there exists a proper, thin 2-qBMG on $2ms$ vertices whose color-preserving automorphism group contains a subgroup isomorphic to ${\rm{Sym}}_m$.  \end{theorem}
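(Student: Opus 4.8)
The plan is to realize Theorem~\ref{the2410204} as a direct corollary of Construction~\ref{ex:general_case}, so the work is mostly bookkeeping rather than new ideas. First I would fix integers $m,s\ge 2$ and instantiate the general construction: choose $s$ pairwise disjoint $m$-element sets $U_1,\ldots,U_s$ and $s$ further pairwise disjoint $m$-element sets $W_1,\ldots,W_s$ (all $2s$ sets mutually disjoint), pick arbitrary bijections $f_{i,i}\colon U_i\to W_i$ for $1\le i\le s$ and $g_{j,j+1}\colon W_j\to W_{j+1}$ for $1\le j\le s-1$, and build $\dG=\dG(U\cup W,E)$ exactly as in Construction~\ref{ex:general_case}. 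The vertex count is $|U|+|W| = sm + sm = 2ms$, giving the stated cardinality. By the discussion at the end of Construction~\ref{ex:general_case}, $\dG$ is a proper, thin 2-qBMG and the map $\pi\mapsto\varphi$ embeds ${\rm{Sym}}_m$ (acting on the $m$ labels of $U_1$) as a subgroup $\Gamma\cong{\rm{Sym}}_m$ of $\aut_I(\dG)$; this supplies the required subgroup.

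Second, I would check that the hypothesis $s\ge 2$ is genuinely needed only to guarantee properness. For $s=1$ the construction degenerates to a single bijection $f_{1,1}\colon U_1\to W_1$, i.e. a disjoint union of $m$ directed edges, which is (N)-trivial and not proper; taking $s\ge 2$ introduces the paths $u_i\,w_i\,(\text{via }f_{i,i})$ together with $w_i\,u_{i+1}\,(\text{via }g_{i,i+1})$, so that (N2) has a non-trivial instance, and since the graph is a proper 2-qBMG whenever (N2) is non-trivial (as recalled in the Background section), properness follows. Similarly $m\ge 2$ is exactly what makes ${\rm{Sym}}_m$ non-trivial; for $m=1$ the automorphism subgroup collapses. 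I would state these observations briefly to justify the quantifiers in the theorem.

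The only mild obstacle is that Construction~\ref{ex:general_case} asserts ``the arguments used in Construction~\ref{ex2310204} can be adapted'' for the verification that $\varphi\in\aut_I(\dG)$ and that the $\varphi$'s form a group with the claimed orbits, but does not write this out in full. So in the proof of Theorem~\ref{the2410204} I would either (a) simply cite Construction~\ref{ex:general_case} as already establishing all of this, or (b) include a one-paragraph confirmation: given $uv\in E$ with, say, $u=u_i\in U_i$, $v=w_j\in W_j$ and $i\le j$, one has $w_j=f_{i,j}(u_i)$, and using the cocycle-type identities (\ref{eq24102024}), (\ref{eq24102024A}) together with $f_{1,j}=f_{i,j}\circ f_{1,i}$-type relations one computes $\varphi(w_j)=f_{i,j}(\varphi(u_i))$, so $\varphi(u_i)\varphi(w_j)\in E$; the case $u=w_j\in W_j$, $v=u_i\in U_i$ with $j<i$ is symmetric via the $g_{i,j}$. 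That $\pi\mapsto\varphi$ is an injective homomorphism is immediate because $\varphi$ restricted to $U_1$ equals $\pi$, so distinct $\pi$ give distinct $\varphi$ and composition is respected coordinate-wise. I expect this verification to be entirely routine once the identities (\ref{eq24102024}) and (\ref{eq24102024A}) are in hand, so the ``hard part'' is really just presenting it compactly; no essential difficulty remains.

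\begin{proof}
Fix integers $m,s\ge 2$. Apply Construction~\ref{ex:general_case} with $U=\{U_1,\ldots,U_s\}$, $W=\{W_1,\ldots,W_s\}$ a collection of $2s$ pairwise disjoint sets each of size $m$, and with arbitrary bijections $f_{i,i}\colon U_i\to W_i$ ($1\le i\le s$) and $g_{j,j+1}\colon W_j\to W_{j+1}$ ($1\le j\le s-1$). Let $\dG=\dG(U\cup W,E)$ be the resulting bipartite digraph. Then $|V(\dG)|=|U|+|W|=sm+sm=2ms$.

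By Construction~\ref{ex:general_case}, $\dG$ is a thin proper 2-qBMG: thinness and properties (N1), (N2), (N3) follow from the identities (\ref{eq24102024}) and (\ref{eq24102024A}) and from $|N^+(v)|\le 1$ for all $v\in V$, while properness holds because $s\ge 2$ forces (N2) to be non-trivial (e.g. $u_1\,w_1$ via $f_{1,1}$ followed by $w_1\,u_2$ via $g_{1,2}$ followed by $u_2\,w_2$ via $f_{2,2}$), and a 2-qBMG that is non-trivial for (N2) is proper.

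Finally, for every permutation $\pi$ of the $m$-element set $U_1$, Construction~\ref{ex:general_case} exhibits a color-preserving automorphism $\varphi=\varphi_\pi$ of $\dG$ whose restriction to $U_1$ is $\pi$. Hence $\pi\mapsto\varphi_\pi$ is an injective homomorphism (injectivity is immediate from $\varphi_\pi|_{U_1}=\pi$, and $\varphi_{\pi\circ\pi'}=\varphi_\pi\circ\varphi_{\pi'}$ follows by the same coordinate-wise computation), so its image is a subgroup of $\aut_I(\dG)$ isomorphic to ${\rm{Sym}}_m$. Since $m\ge 2$, this subgroup is non-trivial, and the theorem follows.
\end{proof}
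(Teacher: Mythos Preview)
Your proposal is correct and follows exactly the paper's approach: the paper simply states ``Therefore, Example~\ref{ex:general_case} gives the following result'' and records Theorem~\ref{the2410204} without further proof, while you spell out the same derivation (instantiate Construction~\ref{ex:general_case}, count $2ms$ vertices, cite thinness and properness, and invoke the embedding $\pi\mapsto\varphi_\pi$). The added bookkeeping about why $s\ge 2$ forces properness and why $\pi\mapsto\varphi_\pi$ is injective is a welcome clarification but introduces nothing new.
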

\section{Orientations and automorphisms of 2-qBMGs}
\label{oa}
We state two properties of orientations of 2-qBMGs.
\begin{theorem}
\label{thm:orientation2qBMG} 
Every orientation of a 2-qBMG satisfying ($*$) is a 2-qBMG. 
\end{theorem}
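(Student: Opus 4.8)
Let $\dG$ be a 2-qBMG satisfying ($*$) and let $\dO$ be an orientation of $\dG$. The plan is to verify the three defining axioms (N1), (N2), (N3) directly for $\dO$, using repeatedly that $E(\dO)\subseteq E(\dG)$, that $\dO$ is bipartite with the same color classes as $\dG$, and that two vertices are independent in $\dO$ precisely when they are independent in $\dG$. The role of ($*$) is that the symmetric edges of $\dG$ form a matching on $V$; hence for every vertex $p$ the out-neighborhood $N^+_{\dG}(p)$ exceeds $N^+_{\dO}(p)$ by at most one vertex, namely the partner $s(p)$ of $p$ in the unique (if any) symmetric edge at $p$, and only when $\dO$ keeps that edge oriented towards $p$.

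Axiom (N1) passes down immediately: if $u,v$ are independent in $\dO$ and there are vertices $w,t$ with $ut,vw,tw\in E(\dO)$, then the same four vertices witness a violation of (N1) in $\dG$, which is impossible. For (N2), suppose $uv,vw,wt\in E(\dO)$. Then $ut\in E(\dG)$ by (N2) for $\dG$, and if $ut\notin E(\dO)$ the edge $\{u,t\}$ must be symmetric in $\dG$ with $tu$ its kept direction, so $tu\in E(\dG)$. Applying (N2) for $\dG$ to the path $t\to u\to v\to w$ gives $tw\in E(\dG)$; together with $wt\in E(\dG)$ this makes $\{w,t\}$ symmetric in $\dG$. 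Bipartiteness forces $u\ne w$, so $\{u,t\}$ and $\{w,t\}$ are two distinct symmetric edges of $\dG$ meeting at $t$, contradicting ($*$). Hence $ut\in E(\dO)$, and (N2) holds.

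The only real work is (N3). Let $w$ be a common out-neighbor of $u$ and $v$ in $\dO$, so $u$ and $v$ lie in the same color class; the case $u=v$ being trivial, assume $u\ne v$. Then $w$ is a common out-neighbor of $u,v$ in $\dG$, so by (N3) for $\dG$ we may assume $N^+_{\dG}(u)\subseteq N^+_{\dG}(v)$. Let $a$ be the single vertex of $N^+_{\dG}(u)\setminus N^+_{\dO}(u)$ when this set is nonempty (undefined otherwise), and define $b$ analogously for $v$. If $b$ is undefined, or $b$ is defined but $b\notin N^+_{\dO}(u)$, then $N^+_{\dO}(u)\subseteq N^+_{\dG}(u)\subseteq N^+_{\dG}(v)$ together with $b\notin N^+_{\dO}(u)$ yields $N^+_{\dO}(u)\subseteq N^+_{\dO}(v)$. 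The remaining case is $b=s(v)$ with $b\to v$ kept by $\dO$ and $b\in N^+_{\dO}(u)$; then $u\to b\to v$ is a directed path in $\dG$, so (N2) for $\dG$ (extend by any $v\to z$) gives $N^+_{\dG}(v)\subseteq N^+_{\dG}(u)$ and hence $N^+_{\dG}(u)=N^+_{\dG}(v)$. In this case I will show $N^+_{\dO}(v)\subseteq N^+_{\dO}(u)$: if $a$ is undefined this is immediate from $N^+_{\dG}(u)=N^+_{\dO}(u)$, and if $a=s(u)$ is defined with $a\to u$ kept by $\dO$, it suffices to rule out $a\in N^+_{\dO}(v)$. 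But $v\to a\in E(\dO)$ would produce the directed path $a\to u\to b\to v$ in $\dG$ (using the symmetric edge $\{u,a\}$), so (N2) for $\dG$ would force $av\in E(\dG)$; together with $va\in E(\dG)$ this makes $\{a,v\}$ symmetric in $\dG$, a second symmetric edge at $a$ besides $\{a,u\}$, again contradicting ($*$). Hence $a\notin N^+_{\dO}(v)$, and $N^+_{\dO}(v)\subseteq N^+_{\dG}(v)=N^+_{\dG}(u)=N^+_{\dO}(u)\cup\{a\}$ forces $N^+_{\dO}(v)\subseteq N^+_{\dO}(u)$. This establishes (N3) for $\dO$, and with it the theorem.

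The main obstacle is exactly this last case of (N3): orienting the symmetric edges could, a priori, destroy the nesting of out-neighborhoods that (N3) requires, and the crux is to show that the only candidate obstruction — where the symmetric partners of $u$ and $v$ are both oriented inwards and cross-connected to $v$ and $u$ respectively — cannot occur, because bi-transitivity of $\dG$ would then manufacture a symmetric edge forbidden by ($*$). Once this is settled, the verifications of (N1) and (N2) need only the observation that the sole new feature of $\dO$ is the removal of one direction from each symmetric edge.
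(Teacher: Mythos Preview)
Your proof is correct. The verifications of (N1) and (N2) are essentially identical to the paper's. For (N3), however, you take a different route: you argue directly by case analysis, exploiting that each vertex loses at most one out-neighbor in passing from $\dG$ to $\dO$, and you drive the contradiction entirely through (N2) applied to the paths $u\to b\to v$ and $a\to u\to b\to v$. The paper instead argues by contradiction and invokes the derived axiom (N3*): from incomparable out-neighborhoods in $\dO$ it finds symmetric edges at $u$ and $v$, uses ($*$) to rule out $wu,wv\in E$, applies (N3*) to obtain $N^-_{\dG}(u)=N^-_{\dG}(v)$, and then produces a second symmetric edge at $v$. Your argument is slightly longer but more self-contained, needing only (N2) and (N3) rather than the auxiliary (N3*); the paper's is terser once (N3*) is on hand. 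Both hinge on the same underlying mechanism: bi-transitivity converts the cross-connection between the symmetric partners of $u$ and $v$ into a forbidden second symmetric edge.
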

\begin{proof} Let $\dG=\dG(V,E)$ be a 2-qBMG satisfying ($*$). Take an orientation $\overrightarrow{O}=\overrightarrow{O}(V,F)$ of $\dG$ with $F\subseteq E$. We prove that $\overrightarrow{O}$ also satisfies (N1),(N2) and (N3*). 

Take an (N1)-configuration [u,t,w,v] in $\dO$. This is also an (N1)-configuration in $\dG$, therefore $uv$ are dependent in $E$ and also in $F$. Hence, (N1) holds for $\dO$. 

To show (N2) take $u,v,w,t\in V$ such that $uv,vw,wt\in F$. As $F\subseteq E$, and (N2) holds in $\dG$, this yields $ut\in E$. If $ut$ is not a symmetric edge of $\dG$, then $ut\in F$ also holds. If $ut$ is a symmetric edge of $\dG$ then $tu,uv,vw\in E$ and hence $tw\in E$ from (N2). But then $t$ is an endpoint of two distinct symmetric edges, namely $tu$ and $tw$, a contradiction with ($*$).

To show that (N3) holds take $u,v\in V$ such that $w\in N(u)\cap N(v)$ in $\overrightarrow{O}$. Then (N3) applies to $u$ and $v$, and $N^+(u)\subseteq N^+(v)$ or $N^+(v)\subseteq N^+(u)$ in $\dG$. Suppose this does not hold for $\dO$. Then there are $v^*\in  N^+(u) \setminus N^+(v)$ and $u^*\in  N^+(v) \setminus N^+(u)$ in $\dO$. Hence, $uu^*$ and $vv^*$ must be symmetric edges in $\dG$. From ($*$) $wv\notin E$ and $wu\notin E$, and (N3*) applies to $u$ and $v$ in $\dG$. Hence $N^-(u)=N^-(v)$ in $\dG$, and $u^*\in N^-(v)$, violating $(*)$ for $v$.
\end{proof}

If $\overrightarrow{O}$ is an orientation and $g\in \aut_I(\dG)$ is an automorphism of $\dG$, then $g$ is not an automorphism of $\overrightarrow{O}$ in general but some important exceptions occur. Such an exception is the $UW$-\emph{orientation,} which is the special orientation of $\dG$ obtained by retaining that edge from each symmetric edge whose tail and head are in the color classes $U$ and $W$, respectively. This claim is a consequence of the following theorem.     
\begin{theorem}
\label{th06112024} Let $\dG$ be a 2-qBMG with color classes $U$ and $W$. Let  $\overrightarrow{O}$ be a $UW$-orientation of $\dG$. 
Then $\aut_I(\dG)=\aut_I(\overrightarrow{O})$. 
\end{theorem}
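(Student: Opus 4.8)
The plan is to show the two inclusions $\aut_I(\overrightarrow{O}) \subseteq \aut_I(\dG)$ and $\aut_I(\dG) \subseteq \aut_I(\overrightarrow{O})$ separately, the first being essentially immediate and the second requiring an argument about how a color-preserving automorphism interacts with the $UW$-orientation of symmetric edges. First I would observe that the underlying undirected graph of $\overrightarrow{O}$ coincides with that of $\dG$, since an orientation retains exactly one edge from each symmetric edge and keeps all non-symmetric edges. Hence $\aut(\overrightarrow{O}) \le \aut(G) \supseteq \aut(\dG)$, and the color classes $U,W$ are the same for both digraphs; so $\aut_I(\overrightarrow{O})$ and $\aut_I(\dG)$ are both subgroups of the same color-preserving automorphism group of $G$. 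This reduces the theorem to checking that a color-preserving permutation $g$ of $V$ preserving the undirected edges preserves the edge-set $E(\dG)$ if and only if it preserves $F = E(\overrightarrow{O})$.

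Next, for the inclusion $\aut_I(\overrightarrow{O}) \subseteq \aut_I(\dG)$: let $g \in \aut_I(\overrightarrow{O})$ and take $xy \in E(\dG)$. If $xy$ is not a symmetric edge of $\dG$, then $xy \in F$, so $g(x)g(y) \in F \subseteq E(\dG)$. If $xy$ is symmetric in $\dG$, say with $x \in U$ and $y \in W$, then the $UW$-orientation retains precisely the edge $xy$ (the one from $U$ to $W$), so $xy \in F$; since $g$ is color-preserving, $g(x) \in U$ and $g(y) \in W$, and $g(x)g(y) \in F \subseteq E(\dG)$. But also $yx \in E(\dG)$, and because $g$ preserves the underlying graph, $\{g(x),g(y)\}$ is an undirected edge; I must check $g(y)g(x) \in E(\dG)$ as well. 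Here is where I use that $g$ preserves $F$: since $g$ is a bijection of $V$ fixing $U$ and $W$ setwise, $g^{-1}$ also lies in $\aut_I(\overrightarrow{O})$, and applying the same argument to $g^{-1}$ shows $g(x)g(y)$ being in $F$ forces the corresponding symmetric partner; more directly, in the $UW$-orientation every edge of $F$ goes from $U$ to $W$, so an edge of $\dG$ from $W$ to $U$ exists exactly when the reverse edge lies in $F$. I would phrase this cleanly: $[a,b] \in E(\dG)$ with $a \in W, b \in U$ iff $[b,a] \in F$; hence $g(y)g(x) \in E(\dG)$ iff $g(x)g(y) \in F$, which we have. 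So $g \in \aut_I(\dG)$.

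For the reverse inclusion $\aut_I(\dG) \subseteq \aut_I(\overrightarrow{O})$, let $g \in \aut_I(\dG)$ and take $xy \in F$. Then $xy \in E(\dG)$, so $g(x)g(y) \in E(\dG)$. I need $g(x)g(y) \in F$. If $xy$ is a non-symmetric edge of $\dG$, I claim $g(x)g(y)$ is also non-symmetric: otherwise $g(x)g(y)$ and $g(y)g(x)$ both lie in $E(\dG)$, and applying $g^{-1} \in \aut_I(\dG)$ gives $xy, yx \in E(\dG)$, contradicting non-symmetry. A non-symmetric edge of $\dG$ lies in every orientation, in particular in $\overrightarrow{O}$, so $g(x)g(y) \in F$. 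If $xy$ is a symmetric edge of $\dG$, then since $xy \in F$ and $F$ is the $UW$-orientation, we have $x \in U$ and $y \in W$; $g$ color-preserving gives $g(x) \in U$, $g(y) \in W$, and (by the $g^{-1}$ argument again) $g(x)g(y)$ is a symmetric edge of $\dG$ oriented from $U$ to $W$, hence retained in the $UW$-orientation, i.e. $g(x)g(y) \in F$. Thus $g \in \aut_I(\overrightarrow{O})$, completing the proof.

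The main obstacle is purely bookkeeping: making sure the "$g^{-1}$ also preserves edges" trick is invoked correctly so that symmetric edges map to symmetric edges and non-symmetric to non-symmetric, and that the color-preservation of $g$ pins down the orientation of the image symmetric edge to agree with the $UW$-convention. No structural property of 2-qBMGs beyond bipartiteness is actually needed here — the statement holds for any bipartite digraph — so I would not expect to need axioms (N1)–(N3) in this proof.
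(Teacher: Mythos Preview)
Your argument for the inclusion $\aut_I(\dG)\subseteq\aut_I(\overrightarrow{O})$ is correct and in fact more carefully written than the paper's: using $g^{-1}\in\aut_I(\dG)$ to show that symmetric edges go to symmetric edges and non-symmetric to non-symmetric is exactly the right mechanism.

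The gap is in the reverse inclusion $\aut_I(\overrightarrow{O})\subseteq\aut_I(\dG)$. Your key claim ``in the $UW$-orientation every edge of $F$ goes from $U$ to $W$'' is false: the $UW$-orientation only reorients \emph{symmetric} edges, while non-symmetric edges of $\dG$ directed from $W$ to $U$ stay in $F$ unchanged. Consequently your derived characterization ``$[a,b]\in E(\dG)$ with $a\in W,\ b\in U$ iff $[b,a]\in F$'' also fails, and the step ``$g(x)g(y)\in F$ forces $g(y)g(x)\in E(\dG)$'' is not justified. The underlying obstacle is that $\overrightarrow{O}$ does not remember which of its edges came from symmetric edges of $\dG$, so there is no reason an automorphism of $\overrightarrow{O}$ should respect that distinction.

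Concretely, take $U=\{u_1,u_2\}$, $W=\{w_1,w_2\}$, and $E(\dG)=\{u_1w_1,\ w_1u_1,\ u_2w_2\}$. This is a 2-qBMG (all three axioms hold trivially), and its $UW$-orientation has edge set $F=\{u_1w_1,\ u_2w_2\}$. The color-preserving permutation $g$ swapping $u_1\leftrightarrow u_2$ and $w_1\leftrightarrow w_2$ lies in $\aut_I(\overrightarrow{O})$, but $g(w_1u_1)=w_2u_2\notin E(\dG)$, so $g\notin\aut_I(\dG)$. Thus the inclusion you are trying to prove is false in general, and so is the theorem as stated. Your closing remark that ``no structural property of 2-qBMGs beyond bipartiteness is actually needed'' should have been a warning sign: once the axioms play no role, a tiny bipartite counterexample like this one is worth searching for. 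The paper's own proof asserts, without justification, that ``$uw,wu\in E$ implies that $g(u)g(w)\in E$ is a symmetric edge''; this is precisely the same unproven (and unprovable) step, so the defect is in the statement itself rather than in your strategy.
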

\begin{proof} Take a symmetric edge with endpoints $u$ and $w$. Then $uw,wu\in E$.

If $g\in\aut_I(\dG)$ then $u\in U$ implies $g(u)\in U$, and hence $g(w)\in W$. Therefore, $g$ takes the edge $uw$ of $\overrightarrow{O}$ to the edge $g(u)g(w)$, which is also an edge in $\overrightarrow{O}$. Thus $\aut_I(\dG)\le  \aut_I(\overrightarrow{O})$. 

Conversely, let $g\in \aut_I(\overrightarrow{O})$, and extend the action of $g$ to $E$ by defining the image of $wu\in E$ by $g(wu)=g(w)g(u)$. Then $uw,wu\in E$ implies that $g(u)g(w)\in E$ is a symmetric edge. Therefore, $g(w)g(u)\in E$. Thus $\aut_I(\overrightarrow{O})\le \aut_I(\dG)$.  
\end{proof}

\bibliographystyle{unsrtnat}
\bibliography{ref}

\end{document}